\newtheorem{teo}{Theorem}
\newtheorem{prop}{Proposition}
\newcommand{\re}{\mathbb{R}}
\date{}
\begin{document}

\title[Crossing limit cycles for a family of isochronous centers]
{Crossing limit cycles of planar discontinuous piecewise differential systems formed by isochronous centers}

\author{Claudio A. Buzzi$^1$}
\address{$^1$ Mathematics Department, Universidade Estadual Paulista Julio de Mesquita Filho, 15054-000 São José do Rio Preto, São Paulo, Brazil }
\email{claudio.buzzi@unesp.br}

\author{Yagor Romano Carvalho$^2$}
\address{$^2$ Mathematics Department, Universidade de São Paulo, 13566-590 São Carlos, São Paulo, Brazil}
\email{yagor.carvalho@usp.br}

\author{Jaume Llibre$^3$}
\address{$^3$ Mathematics Department, Universitat Aut\`{o}noma de Barcelona, 08193 Bellaterra, Barcelona, Catalonia, Spain}
\email{jllibre@mat.uab.cat}

\subjclass[2010]{37G15, 37D45.}

\keywords{Limit cycles, linear centers, cubic isochronous centers with homogeneous nonlinearities, discontinuous piecewise differential systems, first integrals}

\begin{abstract}
These last years an increasing interest appeared for studying the planar discontinuous piecewise differential systems motivated by the rich applications in modelling real phenomena. One of the difficulties for understanding the dynamics of these systems is the study their limit cycles. In this paper we study the maximum number of crossing limit cycles of some classes of planar discontinuous piecewise differential systems separated by a straight line, and formed by combinations of linear centers (consequently isochronous) and cubic isochronous centers with homogeneous nonlinearities. For these classes of planar discontinuous piecewise differential systems we solved the extension of the 16th Hilbert problem, i.e. we provide an upper bound for their maximum number of crossing limit cycles.
\end{abstract}

\maketitle

\section{Introduction and statement of the main results}

A {\it limit cycle} is a periodic orbit of a differential system in $\re^2$ that is isolated in the set of all its periodic orbits.  The analysis of the existence of limit cycles became important in the applications of the real world because many phenomena are related to their existence, see for instance the Van der Pol oscillator \cite{Van1920,Van1926}. The study of limit cycles began with Poincaré \cite{Poincare1897} at the end of the nineteenth century.  On the other hand, the study of the continuous piecewise linear differential systems separated by a straight line has special attention from the mathematicians, mainly because these systems appear in a natural way in the control theory, see for instance the books \cite{HenMicDale1997,LliTer2014,Narendra2014,Ogata1990}, in mechanics, electrical circuits, economy, see for instance the books \cite{BerBudCham2008,SimJohn2010} and the surveys \cite{MakLam2012,Teixeira2012}.  

The easiest continuous piecewise linear differential systems are formed by two linear differential systems separated by a straight line and it is known that such systems have at most one limit cycle, see \cite{FrePonRodTor1998,LliOrdPon2013,LumChu1991,LumChu1992}.  But it is also known that if both linear differential systems are linear centers, then the continuous piecewise linear differential system has no limit cycles, see for example \cite{LliTei2018}. However if we eliminate the continuity of such systems, that is, they do not need to coincide on the line of discontinuity, then it is known that these systems can have three limit cycles as we can see in \cite{BuzPesTor2013,DenLuis2013, FreiPonTor2014, HunYan2012, Lip2014, LliPon2012}, but it is unknown if three is the maximum number of limit cycles that they can have.

When we consider planar discontinuous piecewise differential systems we can have two kinds of limit cycles: {\it sliding limit cycles} or {\it crossing limit cycles}. The first ones contain some segment of the line of discontinuity, and the second ones only contain some points of the line of discontinuity. For more details on the discontinuous piecewise differential systems see the books \cite{BerBudCham2008, Fil1988, MakLam2012, SimJohn2010}.  In this work we are going to study the crossing limit cycles, and in what follows sometimes when we talk about limit cycles, we are talking about crossing limit cycles.  

An {\it isochronous center} of a planar differential system is a singularity such that there is a neighborhood of it formed by periodic orbits with the same period. It is well known that the linear centers are isochronous.
 
Llibre and Teixeira in \cite{LliTei2018} were interested in studying if a discontinuous piecewise differential system formed with only linear centers can create limit cycles. In this work we are interested in a similar problem. The first objective is to study the maximum number of limit cycles that discontinuous piecewise differential systems separated by a straight line can have when one differential system is a linear center and the other is a cubic isochronous center with homogeneous nonlinearities. The second objective is to study the maximum number of limit cycles that a discontinuous piecewise differential systems separated by a straight line can have when both differential systems are cubic isochronous centers with homogeneous nonlinearities. Without loss of generality we can assume that the straight line of discontinuity is $x=0$.

Consider the polynomial differential systems of the form
\begin{equation}\label{sistemacubico}
\begin{array}{l}
\dfrac{dx}{dt}=\dot x = -y + a_{30}x^3 +a_{21}x^2 y+ a_{12}xy^2 +a_{03}y^3, \vspace{0.2cm} \\
\dfrac{dy}{dt}=\dot y = x + b_{30}x^3 +b_{21}x^2 y+ b_{12}xy^2 +b_{03}y^3.
\end{array}
\end{equation}
Pleshkan in \cite{Pleshkan1969} classify which of these differential systems have an isochronous center at the origin of coordinates. Thus a cubic system \eqref{sistemacubico} has an isochronous center at the origin if and only if the system can be transformed to one of the following four differential systems
\begin{equation}\label{sistemas1s2s3s4}
\begin{aligned}
&(S_1^3): \; \left. \begin{array}{l}
\dot{x}=-y + x^3 - x y^2, \\
\dot{y}=x + x^2y - y^3,
\end{array} \right. \; \; \; \; \; \; \; 
(S_2^3): \; \left. \begin{array}{l}
\dot{x}=-y + x^3 - 3 x y^2, \\
\dot{y}=x + 3x^2y - y^3, 
\end{array} \right.  \\
& (S_3^3): \; \left. \begin{array}{l}
\dot{x}=-y + 3x^2y,  \\
\dot{y}=x -2x^3 +9 x y^2,
\end{array} \right. \; \; \; \; \; \;
(S_4^3): \; \left. \begin{array}{l}
\dot{x}=-y - 3x^2y, \\
\dot{y}=x + 2x^3-9xy^2.
\end{array} \right.
\end{aligned}
\end{equation}
doing a linear change of coordinates and a rescaling of time.

The first integrals of the differential systems \eqref{sistemas1s2s3s4} can be founded in \cite{ChaSab1999}, and they are
\begin{equation*}\label{intprimeiras1s2s3s4}
\begin{aligned}
&(S_1^3): H_1(x,y)=\dfrac{x^2+y^2}{1+2xy}, \; \; \; \; \; \; \; \; \; \; \; \; \; \; \; \;  \; \; \; \;  \; \; \; (S_2^3): H_2(x,y)=\dfrac{(x^2+y^2)^2} {1+4xy},\\
&(S_3^3): H_3(x,y)=\dfrac{x^2+y^2-4x^4+4x^6}{(-1+3x^2)^3}, \; \; \; \; \; (S_4^3): H_4(x,y)=\dfrac{x^2+y^2+4x^4+4x^6}{(1+3x^2)^3},
\end{aligned}
\end{equation*}
respectively.   

Llibre and Teixeira proved in \cite{LliTei2018} that after doing an affine transformation and a rescaling of the independent variable any linear center can be written into the form
\begin{equation}\label{sistemalinear}
(L_c): \; \begin{array}{l}
\dot{x}= - A x -\dfrac{4 A^2 + \omega^2}{4 D}  y + B, \vspace{0.2cm} \\
\dot{y}=D x + A y + C,
\end{array}
\end{equation}
where $A, \; B, \; C, \; D, \; \omega$ are real numbers with  $D, \; \omega>0$. This system has first integral
\begin{equation}\label{intprimeiralinear}
H_L(x,y) = 4 ( D x + A y)^2 + 8 D (C x - B y) + y^2 \omega^2.
\end{equation}

In this paper we only study discontinuous piecewise differential systems separated by a straight line and formed by two differential systems which can be transformed, after an affine transformation, in differential systems belonging to some of the three classes  $(S_1^3)$, $(S_2^3)$ and $(L_c)$. Here we do not take into account a rescaling of the independent variable because such a rescaling does not change the phase portrait of the differential systems, only change the speed in which their orbits are travelled.

Let $A,B \in \{(L_c),(S_1^3),(S_2^3)\}$ be. Then we denote by $N_{clc}[A,B]$ the maximum number of crossing limit cycles that the class of the discontinuous piecewise differential systems separated by a straight line and formed by the differential systems $A$ and $B$ can have.   

Our results are summarized in the next theorem.

\begin{teo}\label{teofinalcap4}
Consider the class of discontinuous piecewise  differential systems separeted by a straight line and formed by two differential systems which after an affine change of variables belong to the classes $(L_c)$, $(S_1^3)$ or $(S_2^3)$. Then
\begin{itemize}
\item [(i)] $N_{clc}[(L_c),(S_1^3)]=N_{clc}[(S_1^3),(S_1^3)]=1$;
		
\item [(ii)] $N_{clc}[(L_c),(S_2^3)]=2$;
		
\item [(iii)] $1 \leq N_{clc}[(S_1^3),(S_2^3)] \leq 3$;
		
\item [(iv)] $1 \leq N_{clc}[(S_2^3),(S_2^3)] \leq 9$;
		
\item[(v)] $N_{clc}[(L_c),(L_c)]=0$.
\end{itemize}
\end{teo}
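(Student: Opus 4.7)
The plan is to reduce the problem to counting admissible solutions of a system of two polynomial equations in the symmetric coordinates $s=y_1+y_2$ and $p=y_1y_2$, where $(0,y_1)$ and $(0,y_2)$ are the two intersection points of a crossing periodic orbit with the line of discontinuity $x=0$. Since each subsystem is a center, its orbit through $(0,y_1)$ passes through $(0,y_2)$ if and only if $\tilde H(0,y_1)=\tilde H(0,y_2)$, where $\tilde H$ denotes the canonical first integral $H_L$, $H_1$ or $H_2$ composed with the affine change of variables that brings the subsystem into canonical form. Clearing denominators yields a polynomial in $(y_1,y_2)$ that is antisymmetric and therefore divisible by $y_1-y_2$; the quotient is symmetric and hence a polynomial in $(s,p)$. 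A crossing limit cycle corresponds to an isolated common zero of the two reduced polynomials lying in the admissible region $s^2-4p>0$.

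The central computation is to bound the degree of each reduced polynomial in $(s,p)$. For class $(L_c)$, $H_L$ is a quadratic polynomial, so its restriction to any line is quadratic in one parameter, and the reduced equation simplifies to $s=s_0$ for some constant $s_0$ depending on the parameters. For $(S_1^3)$, both numerator and denominator of the restriction of $H_1$ to an arbitrary line are polynomials of degree two; a direct expansion of $N(y_1)D(y_2)-N(y_2)D(y_1)$ shows that after division by $y_1-y_2$ only the monomials $1$, $s$ and $p$ remain, so the reduced equation is affine in $(s,p)$. For $(S_2^3)$ the restricted numerator has degree four and the denominator degree two, so the antisymmetrized expression has total degree at most six in $(y_1,y_2)$ and its quotient by $y_1-y_2$ has weight at most five in the grading $\deg s=1$, $\deg p=2$. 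I would then inspect each weight separately and verify that the only weight-$5$ monomial that survives antisymmetrization is $sp^2$, the weight-$4$ survivors are $s^2p$ and $p^2$, and the weight-$3$ survivors are $s^3$ and $sp$. Consequently the reduced polynomial has ordinary degree at most $3$ in $(s,p)$, and its degree in $p$ alone is at most $2$.

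These degree bounds immediately yield the upper bounds in the theorem. In (v), two equations $s=s_0^A$ and $s=s_0^B$ are generically inconsistent and otherwise define a one-parameter family of periodic orbits, so no limit cycle arises. In (i), two affine equations in $(s,p)$ have at most one common solution. In (ii), substituting $s=s_0$ into the $(S_2^3)$ equation (degree at most $2$ in $p$) yields a quadratic in $p$, hence at most two solutions. In (iii), B\'ezout applied to a line and a cubic curve in $(s,p)$ bounds the common zeros by $3$. In (iv), B\'ezout applied to two cubic curves bounds them by $9$. For the lower bounds in (i)--(iv) I would exhibit explicit examples: in (i) and (ii) one checks that for an open set of parameters the unique (resp.\ both) solutions of the linear system lie in the admissible region $s^2-4p>0$ and yield transverse crossing limit cycles; in (iii) and (iv) it suffices to display a single example with one limit cycle, which can be obtained by a small perturbation off a configuration with a known periodic orbit.

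The main obstacle I anticipate is the weight-by-weight cancellation argument for class $(S_2^3)$, where one must verify that each of the a priori allowed monomials $s^5$, $s^4$ and $s^3p$ has vanishing coefficient in the reduced polynomial; this is a direct but nontrivial algebraic verification with no simple uniform pattern. A secondary difficulty, which explains the nonsharpness of the theorem in items (iii) and (iv), is deciding whether the B\'ezout bounds $3$ and $9$ are actually attained by admissible real configurations; since only $N_{clc}\geq 1$ is asserted there, this sharpness question is left open and I would not aim to resolve it here.
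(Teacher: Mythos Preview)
Your approach is correct and its core coincides with the paper's: both reduce a crossing limit cycle to an isolated solution of $\tilde H^i(0,y_1)=\tilde H^i(0,y_2)$, $i=1,2$, factor out $y_1-y_2$, and bound the remaining solutions by degree considerations. The difference is only organizational. The paper handles the five cases by a mixture of methods---B\'ezout in $(y_1,y_2)$ with a halving-by-symmetry argument for $(L_c,S_1^3)$, explicit substitution of one variable from one equation into the other for $(L_c,S_2^3)$, $(S_1^3,S_1^3)$ and $(S_1^3,S_2^3)$, and the symmetric variables $z=y_1+y_2$, $w=y_1y_2$ only for $(S_2^3,S_2^3)$---whereas you work in $(s,p)$ uniformly. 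Your degree claims ($s=s_0$ for $L_c$; affine for $S_1^3$; cubic with $p$-degree $\le 2$ for $S_2^3$) are exactly what the paper's computations produce, and they give the same upper bounds.

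One remark on the ``obstacle'' you anticipate: the vanishing of $s^5$, $s^4$, $s^3p$ for $(S_2^3)$ has a uniform explanation rather than requiring term-by-term inspection. Writing $N(y)=\sum n_iy^i$ and $D(y)=\sum d_jy^j$ for the restrictions to $x=0$, each summand $(y_1^iy_2^j-y_1^jy_2^i)/(y_1-y_2)$ equals $\pm p^{\min(i,j)}h_{|i-j|-1}(y_1,y_2)$ (with $h_m$ the complete homogeneous symmetric polynomial), which has ordinary $(s,p)$-degree $\max(i,j)-1$. Hence the reduced equation always has $(s,p)$-degree at most $\max(\deg N,\deg D)-1$, giving $3$ for $(S_2^3)$, $1$ for $(S_1^3)$, and $1$ (in $s$ alone) for $(L_c)$ with no further work. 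For the lower bounds the paper exhibits explicit piecewise systems and checks the resulting closed curves; your plan is the same in spirit, but you should expect to write down concrete examples rather than rely on a genericity argument.
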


In the following table we summarize the results of Theorem \ref{teofinalcap4}, and where we have added the numbers between parentheses, which mean the maximum number of known limit cycles for the corresponding class of discontinuous piecewise differential systems. Inside the proof of Theorem \ref{teofinalcap4} we will provide the corresponding discontinuous piecewise differential systems realizing the number of limit cycles which appear between parentheses.

\[
\begin{tabular}{|c|c|c|c|}
\hline
$\;$        & Linear & $(S_1^3)$   & $(S_2^3)$ \\
\hline
Linear     & $0$        & $1 \; (1) $ & $2 \; (2)$ \\
\hline
$(S_1^3)$  & $1 \; (1) $ & $1 \; (1) $ & $3 \; (1)$ \\
\hline
$(S_2^3)$ &  $2 \; (2)$ &  $3 \; (1)$ & $9 \; (1)$ \\
\hline
\end{tabular} 
\]

\vspace{0.4cm}

We want to stress that our contribution in Theorem \ref{teofinalcap4} restricts to items (i), (ii), (iii) and (iv). As we have already said item (v) was proved, for example in \cite{LliTei2018}.

\section{Definitions and Preliminaries} 

In this section we review some definitions and previous results that will be used for proving our results. Let 
\begin{equation}\label{sistemanoplano}
\dot{x}=P(x,y), \qquad \dot{y}=Q(x,y),
\end{equation}
be a planar polynomial differential system. Then a nonconstant analytic function $H=H(x,y)$ is a {\it first integral} of system \eqref{sistemanoplano} if it is constant on all solution curves $(x(t),y(t))$, that is, $H(x(t),y(t))=constant$ for all values of $t$ for which $H(x(t),y(t))$ is defined.  Clearly $H$ is a first integral of system \eqref{sistemanoplano} if and only if 
$$
XH = P\frac{\partial H}{\partial x} + Q \frac{\partial H}{\partial y} = 0.
$$ 
Therefore the solutions of system \eqref{sistemanoplano} are contained in the level curves of the function $H$.  A differential system in the plane with a first integral is said {\it integrable}. For more details on integrable systems see Chapter 8 of \cite{DumLliArt2006}.

Consider a planar discontinuous piecewise differential system separated by the straight line $x=0$. A periodic orbit of a such system must intersect the line $x = 0$ exactly in two points. Since we always will work with integrable systems, let $H^1$ and $H^2$ be the two first integrals of the two differential systems forming the discontinuous piecewise differential system. If we have a limit cycle which intersect $x=0$ in the two points by $(0,y_1)$ and $(0,y_2)$ with $y_1< y_2$, then
\begin{equation}\label{eqciclos}
H^1(0,y_1)-H^1 (0,y_2) = 0, \quad \mbox {and} \quad H^2(0,y_1) -H^2(0,y_2) = 0.
\end{equation}
Therefore our objective will be to control how many solutions the system \eqref{eqciclos} has, and to try to find discontinuous piecewise differential systems satisfy this number of solutions. 

It is important to note that every solution $y_1<y_2$ of system \eqref{eqciclos} in general does not provide a limit cycle. For instance when the system has a continuum of solutions; or when the level curves of either $H^1$ or $H^2$ through the points $(0,y_1)$ and $(0,y_2)$ are disconnected; or when the two pieces of the level curves of $H^1$ and $H^2$ through the points $(0,y_1)$ and $(0,y_2)$ provide a closed curve, but the two pieces of the two orbits of the two differential systems forming the discontinuous piecewise differential system have different orientation. In summary, every crossing limit cycle provides a unique solution of system \eqref{eqciclos} with $y_1<y_2$, but a solution of the system \eqref{eqciclos} does not necessarily provides a crossing limit cycle. 

Since the system \eqref{eqciclos} for our discontinuous piecewise differential systems always can be reduced to a polynomial system, a good tool that we will use for estimating its number of solutions will be the Bezout Theorem. This theorem says that if a polynomial system has a finite number of solutions, then the number of its solutions is at most the product of the degrees of the polynomials that appear in the system, for more details see \cite{Shaf1974, Vainsencher1996}. 

We also must take into account that if we have a polynomial system 
$$ 
F(y_1,y_2) = 0,  \qquad G(y_1,y_2) = 0, 
$$ 
satisfying that $F(y_1,y_2) = F(y_2,y_1)$ and $G(y_1,y_2) = G(y_2,y_1)$, then $(y_1,y_2)$ and $(y_2,y_1)$ are solutions. Since we only are interested in the solutions $y_1<y_2$. So the number of solutions of system \eqref{eqciclos} must be divided by two, in order to obtain an upper bound for the number of limit cycles.

Moreover, whenever we have a symmetric polynomial system (invariant by permutations of its variables), as our system \eqref{eqciclos} sometimes it will be convenient to do the change of variables $(y_1,y_2) \to (z,w)$ given by $z= y_1+y_2$ and $w=y_1y_2$, in order to study its solutions.  For more details see Section III.4 of \cite{GarLeq2006}.

Doing the general affine change of variables $x = aX + bY + c$, $y = \alpha X + \beta Y + \gamma $, with $ a \beta - b \alpha \neq 0 $, the first integral of the system $(S_1^3)$ is transformed into
\begin{equation*}\label{sistemaiso1mud}
H_{1c}(X,Y)= \dfrac{(c + a X + b Y)^2 + (X \alpha + Y \beta + \gamma)^2}{1 + 2 (c + a X + b Y) + (X \alpha + Y \beta + \gamma)},
\end{equation*}
and the first integral of the system $(S_2^3)$ is transformed into
\begin{equation*}\label{sistemaiso2mud}
H_{2c}(X,Y)= \dfrac{((c + a X + b Y)^2 + (X \alpha + Y \beta + \gamma)^2)^2}{1 + 4 (c + a X + b Y)  (X \alpha + Y \beta + \gamma)}.
\end{equation*}

The normal form of a general linear center and its first integral are given in \eqref{sistemalinear}  and \eqref{intprimeiralinear}, respectively.

\section{Proof of Theorem $\ref{teofinalcap4}$}

\begin{figure}[!htb]
\begin{center}
\includegraphics[scale=0.4]{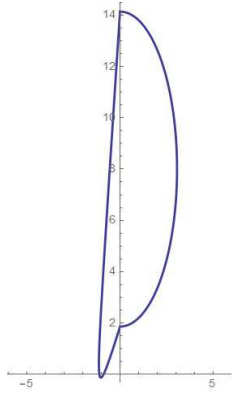}
\caption{The limit cycle of the discontinuous piecewise differential system \eqref{sistema1positvo}-\eqref{sistema1negativo}  of Proposition \ref{teo1}.}\label{cicloteo1}
\end{center}
\end{figure}

The proof that $N_{clc}[(L_c),(S_1^3)]=1$ is given in the next proposition.

\begin{prop}\label{teo1}
Consider the class of discontinuous piecewise differential systems separated by the straight line $x=0$ and formed by a linear center and a cubic isochronous center $(S_1^3)$ after an arbitrary affine change of variables.  Then these differential systems can have at most one limit cycle.  Moreover, the discontinuous piecewise differential system in this class formed by the differential system
\begin{equation}\label{sistema1positvo}
\dot{x}=2 - \dfrac{y}{4}, \qquad \dot{y}=x,
\end{equation}
in $x\ge 0$; and by the differential system
\begin{equation}\label{sistema1negativo}
\begin{array}{l}
\dot{x}=  \dfrac{1}{2}(23 + 38 x + 7 y + 16 x^2 + 16 x y - 3 y^2 + 8 x^2 y - 2 x y^2), \vspace{0.2cm} \\
\dot{y}=12 + 12 x + 13 y + 16 x y + 4 x y^2 - y^3, 
\end{array}
\end{equation}
in $x\le 0$,
has one limit cycle, reaching the maximum upper bound. See Figure 1.
\end{prop}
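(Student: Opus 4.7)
The plan is to apply the framework of \eqref{eqciclos} with $H^1 = H_L$ the first integral of the linear center and $H^2 = H_{1c}$ the affinely transformed first integral of $(S_1^3)$. A crossing limit cycle corresponds to a pair $y_1 < y_2$ solving both equations in \eqref{eqciclos} whose associated orbital arcs glue into a closed curve with consistent orientation.

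For the linear center, evaluating $H_L$ at $x=0$ gives
$$H_L(0,y) = (4A^2+\omega^2)\,y^2 - 8DB\,y,$$
so $H_L(0,y_1)-H_L(0,y_2) = (y_1-y_2)\bigl[(4A^2+\omega^2)(y_1+y_2) - 8DB\bigr]$. Since $y_1\neq y_2$ and $\omega>0$, this forces $s := y_1+y_2 = 8DB/(4A^2+\omega^2)$, a constant depending only on the parameters of the linear center. Thus one of the two equations in \eqref{eqciclos} already rigidifies the sum $s$.

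For the cubic isochronous center, $H_{1c}(0,y) = P(y)/Q(y)$ with $P(y)=(c+by)^2+(\beta y+\gamma)^2$ and $Q(y)=1+2(c+by)(\beta y+\gamma)$ both quadratic in $y$. Clearing denominators, one checks that $P(y_1)Q(y_2)-P(y_2)Q(y_1)$ is antisymmetric under $y_1 \leftrightarrow y_2$, hence factors as $(y_1-y_2)\,g(y_1,y_2)$ with $g$ symmetric of bidegree at most $(1,1)$. Writing $g$ in terms of the elementary symmetric functions $s=y_1+y_2$ and $p=y_1 y_2$ and the coefficients $p_i,q_i$ of $P,Q$, a direct expansion yields
$$g = (p_1 q_0 - p_0 q_1) + (p_2 q_0 - p_0 q_2)\,s + (p_2 q_1 - p_1 q_2)\,p.$$
Substituting the constant value of $s$ from the linear-center step converts this into a single linear equation in $p$. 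Hence either $p$ is uniquely determined, in which case $(y_1,y_2)$ are the two ordered roots of $t^2-st+p=0$ whenever $s^2-4p>0$, or the equation has no solution, or it is identically satisfied and produces a continuum of candidates, none of which is isolated. In every case the piecewise system has at most one crossing limit cycle, which is the asserted upper bound.

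For the realisation I would substitute the explicit systems \eqref{sistema1positvo}--\eqref{sistema1negativo} into the scheme above. The linear center \eqref{sistema1positvo} yields $H_L(0,y)=y^2-16y$, so $s=16$. Identifying the affine change of coordinates that sends \eqref{sistema1negativo} into $(S_1^3)$ determines the corresponding $P$ and $Q$, and the associated linear equation in $p$ admits a unique solution satisfying $s^2-4p>0$, producing a single admissible pair $(y_1,y_2)$. A final orientation check on the two orbital arcs joining $(0,y_1)$ to $(0,y_2)$ confirms that this algebraic solution indeed closes into a genuine crossing limit cycle, as in Figure \ref{cicloteo1}. The main technical point of the whole argument is the reduction of the cubic-center equation to linear form in $p$ via the symmetric-functions change of variables; once this is established, both the counting bound and the verification of the explicit example are routine.
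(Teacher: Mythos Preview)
Your argument is correct and follows essentially the same route as the paper: reduce to the two equations \eqref{eqciclos}, factor out $(y_1-y_2)$, and observe that the linear-center equation pins down $y_1+y_2$ while the $(S_1^3)$-equation is then of degree one in the remaining variable. The only cosmetic difference is that the paper invokes B\'ezout's theorem on the pair of polynomials of degrees~$1$ and~$2$ and then halves by the $(y_1,y_2)\leftrightarrow(y_2,y_1)$ symmetry, whereas you pass explicitly to the elementary symmetric functions $(s,p)$ and read off a single linear equation in $p$; the paper reserves this symmetric-functions trick for the $(S_2^3,S_2^3)$ case. For the realisation, the paper carries out what you sketch: the affine change is $(x,y)\mapsto(3+2x,\,1+2x-y)$, the reduced system becomes $-16+y_1+y_2=0$ and $-46-7y_1-7y_2+6y_1y_2=0$, with unique solution $(y_1,y_2)=\tfrac{1}{3}\bigl(24-\sqrt{339},\,24+\sqrt{339}\bigr)$.
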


\begin{proof}
Under the assumptions of the proposition if such discontinuous piecewise differential systems have a limit cycle intersecting the discontinuity straight line $x=0$ in the two points $(0,y_1)$ and $(0,y_2)$, we have that $y_1$ and $y_2$ must satisfy that
\begin{equation*}
H_{1c}(0, y_1) - H_{1c}(0, y_2) = 0, \quad  \mbox{and} \quad
H_L(0, y_1) - H_L(0, y_2) = 0,
\end{equation*}
or equivalently
\begin{equation*}
\begin{aligned}
&(y_1 - y_2)( 8 B D - 4 A^2 y_1 - 4 A^2 y_2 - y_1 \omega^2 - y_2 \omega^2)=0, \\			
&(y_1 - y_2)(-2 b c - b^2 y_1 - b^2 y_2 + 2 c^3 \beta + 2 b c^2 y_1 \beta + 2 b c^2 y_2 \beta + 2 b^2 c y_1 y_2 \beta - y_1 \beta^2  \\
&-y_2 \beta^2 - 2 c y_1 y_2 \beta^3 - 2 b c^2 \gamma - 2 b^2 c y_1 \gamma - 2 b^2 c y_2 \gamma - 2 b^3 y_1 y_2 \gamma - 2 \beta \gamma - 2 c y_1 \beta^2 \gamma \\
&- 2 c y_2 \beta^2 \gamma + 2 b y_1 y_2 \beta^2 \gamma - 2 c \beta \gamma^2 + 2 b y_1 \beta \gamma^2 + 2 b y_2 \beta \gamma^2 + 2 b \gamma^3)/ ((1 + 2 c y_1 \beta\\
&+ 2 b y_1^2 \beta + 2 c \gamma +  2 b y_1 \gamma) (1 + 2 c y_2 \beta + 2 b y_2^2 \beta + 2 c \gamma + 2 b y_2 \gamma))=0 .
\end{aligned}
\end{equation*}
Since $(1 + 2 c y_1 \beta+ 2 b y_1^2 \beta + 2 c \gamma + 2 b y_1 \gamma) (1 + 2 c y_2 \beta + 2 b y_2^2 \beta + 2 c \gamma + 2 b y_2 \gamma) \neq 0 $ and we have $y_1 \neq y_2$, then we obtain the equivalent system
\begin{equation}\label{sistemageral1}
\begin{aligned}
& F_1^1(y_1,y_2)= 8 B D - 4 A^2 y_1 - 4 A^2 y_2 - y_1 \omega^2 - y_2 \omega^2=0 ,\\
& F_2^1(y_1,y_2)=  -2 b c - b^2 y_1 - b^2 y_2 + 2 c^3 \beta + 2 b c^2 y_1 \beta + 2 b c^2 y_2 \beta + 2 b^2 c y_1 y_2 \beta - y_1 \beta^2 \\
& \phantom{F_2^1(y_1,y_2)=} -y_2 \beta^2 - 2 c y_1 y_2 \beta^3 - 2 b c^2 \gamma - 2 b^2 c y_1 \gamma - 2 b^2 c y_2 \gamma - 2 b^3 y_1 y_2 \gamma - 2 \beta \gamma \\
& \phantom{F_2^1(y_1,y_2)=}  - 2 c y_1 \beta^2 \gamma - 2 c y_2 \beta^2 \gamma + 2 b y_1 y_2 \beta^2 \gamma -  2 c \beta \gamma^2 + 2 b y_1 \beta \gamma^2 +  2 b y_2 \beta \gamma^2 \\
& \phantom{F_2^1(y_1,y_2)=}  + 2 b \gamma^3=0.
\end{aligned}
\end{equation}
So in system \eqref{sistemageral1} we have that $F_1^1(y_1,y_2)$ is a polynomial of degree $1$ and $F_2^1(y_1,y_2)$ is a polynomial of degree $2$. Eventually system \eqref{sistemageral1} could have a continuum of solutions $(y_1,y_2)$, but then the possible periodic solutions would not be limit cycles. Therefore we assume that this system has finitely many solutions. Then by Bezout Theorem this system has at most two solutions. Since if $(y_1^*,y_2^*)$ is a solution, also $(y_2^*,y_1^*)$ is a solution, but we are interested in the solutions whose first component be smaller than the second one, so it follows that the discontinuous piecewise differential system has at most one limit cycle.
	
Now we shall prove that the discontinuous piecewise differential system separated by the straight line $x=0$ and defined by the differential systems (\ref{sistema1positvo}) and (\ref{sistema1negativo}) has one limit cycle. For $x\ge 0$ the system (\ref{sistema1positvo}) is a linear center and for $x\le 0$ the system is the cubic isochronous center $(S_1^3)$ after the affine change of variables $(x,y)\to (3 + 2 x, 1 + 2 x - y)$. The first integrals of this piecewise differential system are
\begin{equation*} 
H_{L1}(x,y) = 4 x^2 - 16 y + y^2 \; \; \; \mbox{and} \; \; \;  H_{1c_1}(x,y)= \dfrac{(3 + 2 x)^2 + (1 + 2 x - y)^2}{1 + 2 (3 + 2 x) (1 + 2 x - y)}.
\end{equation*}
Then the system (\ref{sistemageral1}) for this piecewise differential system is
\[ 
\begin{aligned}
& -16 + y_1 + y_2=0,\\
& \dfrac{-(-46 - 7 y_1 - 7 y_2 + 6 y_1 y_2)}{(-7 + 6 y_1) (-7 + 6 y_2))} =0, 
\end{aligned} 
\]
with the solution $(y_1^{1*},y_2^{1*})= \frac{1}{3} \left( 24 - \sqrt{339},  24 + \sqrt{339} \right)$, and  observe that we have $y_1^{1*}<y_2^{1*}$.  

The solution $(x_1^1(t), y_1^1(t))$ in $x\ge 0$ of system (\ref{sistema1positvo}) such that $(x_1^1(0),y_1^1(0)) = (0, y_1^{1*})$ is contained in the level curve $= H_{L1}(0,y_1^{1*})= - \frac{79}{3} = H_{L1}(0,y_2^{1*})$, i.e. in the curve
\[ 
\begin{aligned}
H_{L1}(x,y) = 4 x^2 - 16 y + y^2 = - \dfrac{79}{3}.
\end{aligned} 
\]
The solution $(x_2^1(t), y_2^1(t))$ in $x\le 0$ of system (\ref{sistema1negativo}) such that $(x_2^1(0),y_2^1(0)) = (0, y_2^{1*})$ is contained in the level curve $H_{1c_1}(0,y_1^{1*})= - \dfrac{7}{3} = H_{1c_1}(0,y_2^{1*})$, i.e. in the curve
\[ 
\begin{aligned}
H_{1c_1}(x,y)= \dfrac{(3 + 2 x)^2 + (1 + 2 x - y)^2}{1 + 2 (3 + 2 x) (1 + 2 x - y)} = - \dfrac{7}{3}.
\end{aligned} 
\]

Drawing the orbits $(x_k^1 (t), y_k^1 (t))$, $k=1,2$,  we obtain the limit cycle of Figure \ref{cicloteo1}.
\end{proof}

The proof that $N_{clc}[(L_c),(S_2^3)]=2$ is given in the next proposition.

\begin{figure}[!htb]
\begin{center}
\includegraphics[scale=0.4]{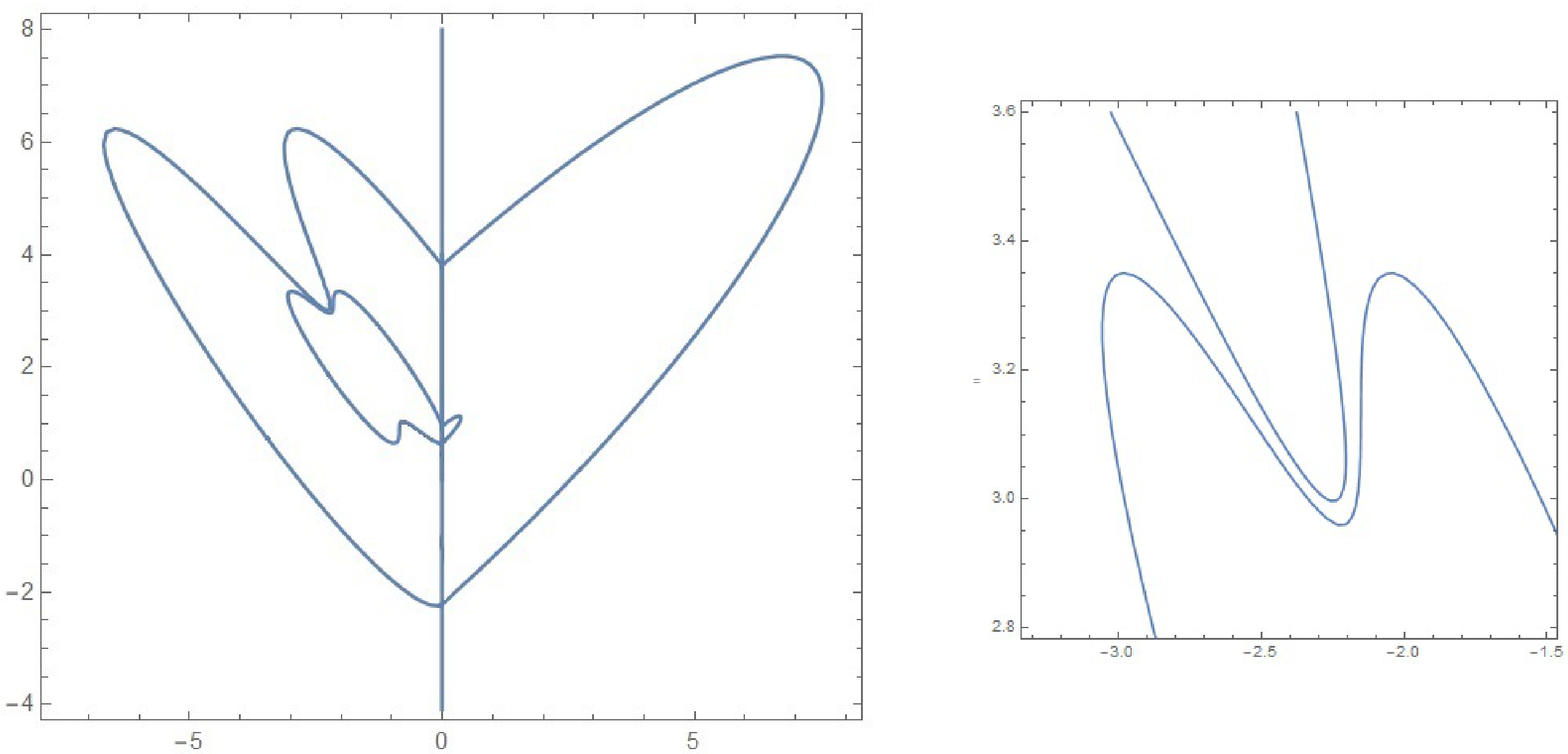}
\caption{The two limit cycles of the discontinuous piecewise differential system \eqref{sistema2positivo}-\eqref{sistema2negativo} of Proposition \ref{teo2}. The figure on the right shows that there is no intersection between the two limit cycles.}\label{cicloteo2}
\end{center}
\end{figure}

\begin{prop}\label{teo2}
Consider the class of discontinuous piecewise differential systems separated by the straight line $x=0$ and formed by a linear center and a cubic isochronous center $(S_2^3)$ after an arbitrary affine change of variables. Then these differential systems can have at most two limit cycles.  Moreover, the discontinuous piecewise differential system in this class formed by the differential system
\begin{equation}\label{sistema2positivo}
\dot{x}=1 + x - \dfrac{5}{4}y, \qquad 		\dot{y}=\dfrac{4}{5}+x-y.
\end{equation}
in $x\ge 0$, and the differential system
\begin{equation}\label{sistema2negativo}
\begin{array}{l}
\dot{x}= \dfrac{1}{2}(-1 + 30 x + 25 y + 24 x^2 - 15 y^2 - 48 x^3 - 120 x^2 y - 90 x y^2 - 20 y^3), \vspace{0.2cm} \\
\dot{y}= -20 x - 15 y + 24 x y + 18 y^2 + 32 x^3 + 72 x^2 y + 48 x y^2 + 9 y^3,
\end{array} 
\end{equation}
in $x\le 0$, has two limit cycles, reaching the maximum uppeer bound. See Figure 2.
\end{prop}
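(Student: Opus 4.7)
The plan is to mirror the argument of Proposition \ref{teo1}, replacing $H_{1c}$ by $H_{2c}$, and to extract the improved bound from a symmetric-function substitution. First I would write the conditions \eqref{eqciclos} with $H^1 = H_L$ from \eqref{intprimeiralinear} and $H^2 = H_{2c}$, the $(S_2^3)$ first integral after the generic affine change of variables recorded just before Section 3. The equation from $H_L$ is the same as in Proposition \ref{teo1}: after extracting the factor $(y_1 - y_2)$ one obtains a symmetric polynomial $F_1^2(y_1,y_2)$ that is linear and depends only on $z := y_1 + y_2$. For the equation $H_{2c}(0,y_1) = H_{2c}(0,y_2)$, I would clear the common denominator $(1+4(c+by_1)(\beta y_1+\gamma))(1+4(c+by_2)(\beta y_2+\gamma))$, which does not vanish on the region where $H_{2c}$ is defined, and observe that the numerator $N(y_1)D(y_2)-N(y_2)D(y_1)$ (with $\deg N = 4$ and $\deg D = 2$ in the $Y$ variable) is antisymmetric of total bidegrees $(4,2)$ and $(2,4)$. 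Dividing by $(y_1-y_2)$ yields a symmetric polynomial $F_2^2(y_1,y_2)$ of total degree $5$.

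The key step is the substitution $(z,w) = (y_1+y_2,\, y_1 y_2)$, in which every symmetric polynomial in $(y_1,y_2)$ of degree $d$ becomes a polynomial of weighted degree $d$ under the weights $\deg z = 1$, $\deg w = 2$. The equation $F_1^2 = 0$ is linear in $z$ and determines $z = z_0$ in closed form in terms of $A,B,D,\omega$. Substituting $z_0$ into $F_2^2$ produces a polynomial in $w$ of degree at most $2$, since every monomial $z^a w^b$ of $F_2^2$ satisfies $a + 2b \le 5$, so $b \le 2$. Assuming the system \eqref{eqciclos} has finitely many solutions (otherwise the corresponding periodic orbits are not isolated and hence not limit cycles), there are thus at most two admissible values $w_0$, and each pair $(z_0, w_0)$ with $z_0^2 - 4 w_0 > 0$ reconstructs a unique ordered pair $y_1 < y_2$ as roots of $t^2 - z_0 t + w_0 = 0$. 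This gives $N_{clc}[(L_c),(S_2^3)] \le 2$. (Equivalently, Bezout applied to $F_1^2 = F_2^2 = 0$ of degrees $1$ and $5$ gives at most five solutions, which, paired by the symmetry $(y_1,y_2)\leftrightarrow(y_2,y_1)$, yield at most two ordered pairs.)

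For the sharpness claim I would verify the explicit example \eqref{sistema2positivo}--\eqref{sistema2negativo}. The right half is already a linear center, so its first integral $H_L$ is immediate. The left half is $(S_2^3)$ transported by a specific affine change of variables that I would identify by inspection of the coefficients in \eqref{sistema2negativo}, producing a closed-form $H_{2c_2}(x,y)$. Substituting both integrals into \eqref{eqciclos} and performing the $(z,w)$-reduction should give the linear equation for $z$ and a genuinely quadratic equation for $w$ with two real solutions $(z_0, w_0^{(i)})$, $i=1,2$, each admitting real roots $y_1^{(i)} < y_2^{(i)}$. One then follows the level curves of $H_L$ for $x\ge 0$ and of $H_{2c_2}$ for $x\le 0$ through these two pairs of points, and checks that the resulting arcs match across $x=0$ with compatible orientations and produce two disjoint crossing limit cycles, as depicted in Figure \ref{cicloteo2}.

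The main obstacle I anticipate is the degree bookkeeping in the second paragraph: one must be careful that no generic cancellation drops the $z$-coefficient of $F_1^2$ to zero (which would admit a continuum in $z$) and that the $w^2$-coefficient of $F_2^2|_{z=z_0}$ is not forced to vanish identically, since in that degenerate locus the bound would rather be $1$ than $2$. Handling these degenerate parameter strata is essentially the only non-mechanical part of the argument; once the weighted-degree count is established, the upper bound $2$ follows at once, and only the explicit example remains to be certified.
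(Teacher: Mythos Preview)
Your proposal is correct and very close to the paper's argument. The paper, after extracting the factor $(y_1-y_2)$, solves the linear equation $F_1^2=0$ for $y_1$ in terms of $y_2$ and substitutes into $F_2^2$ to obtain a polynomial $p_2(y_2)$ of degree~$4$; its four roots, paired by the $(y_1,y_2)\leftrightarrow(y_2,y_1)$ symmetry, give at most two limit cycles. You instead pass to the elementary symmetric coordinates $(z,w)=(y_1+y_2,\,y_1y_2)$, fix $z=z_0$ from the linear equation, and reduce $F_2^2$ to a polynomial of degree~$2$ in $w$. The two approaches are equivalent---the paper's degree-$4$ polynomial in $y_2$ is exactly your degree-$2$ polynomial in $w$ after the substitution $w=y_2(z_0-y_2)$---and the paper itself uses your $(z,w)$ device later, in Proposition~\ref{teo5}. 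Your version has the mild advantage of building in the symmetry from the outset, so no halving step is needed. (Your parenthetical Bezout remark, that $1\times 5=5$ solutions pair to ``at most two ordered pairs,'' is a little glib since $5$ is odd; it is salvaged by noting that any unpaired solution must lie on the diagonal $y_1=y_2$ and hence does not give a limit cycle, but your main $(z,w)$ argument already avoids this subtlety.) The verification of the explicit example proceeds exactly as you outline and as the paper carries out.
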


\begin{proof}
Under the assumptions of the proposition if such discontinuous piecewise differential systems have a limit cycle intersecting the discontinuity straight line $x=0$ in the two points $(0,y_1)$ and $(0,y_2)$, then $y_1$ and $y_2$ must satisfy that
\begin{equation}\label{x1}
H_{2c}(0, y_1) - H_{2c}(0, y_2) = 0, \quad  \mbox{and} \quad H_L(0, y_1) - H_L(0, y_2) = 0,
\end{equation}
or equivalently
\begin{equation*}\label{sistemageral2}
\begin{aligned}
&G_1^2(y_1,y_2)=- (y_1 - y_2)( 8 B D - 4 A^2 y_1 - 4 A^2 y_2 - y_1 \omega^2 - y_2 \omega^2)=0,\\
&G_2^2(y_1,y_2)=-((y_1 - y_2)P_2^2(y_1,y_2)/((1 + 4 c y_1 \beta + 4 b y_1^2 \beta + 4 c \gamma + 4 b y_1 \gamma) \\
&\phantom{G_2^2(y_1,y_2)=}(1 + 4 c y_2 \beta + 4 b y_2^2 \beta + 4 c \gamma + 4 b y_2 \gamma)))=0,
\end{aligned}
\end{equation*}
where $P_2^2(y_1,y_2)$ is a polynomial of degree $5$. Since $(1 + 4 c y_1 \beta + 4 b y_1^2 \beta + 4 c \gamma + 4 b y_1 \gamma) (1 + 4 c y_2 \beta + 4 b y_2^2 \beta + 4 c \gamma + 4 b y_2 \gamma) \neq 0$ and $y_1< y_2$, we can remove these terms to solve the system, and we get the equivalent system
\begin{equation}\label{sistemafinalcaso2}
\begin{aligned}
&F_1^2(y_1,y_2)=8 B D - 4 A^2 y_1 - 4 A^2 y_2 - y_1 \omega^2 - y_2 \omega^2=0, \\
&F_2^2(y_1,y_2)=P_2^2(y_1,y_2)=0.
\end{aligned}
\end{equation}
Eventually system \eqref{sistemafinalcaso2} could have a continuum of solutions $(y_1,y_2)$, but then the possible periodic solutions would not be limit cycles. Therefore we assume that this system has finitely many solutions. From $F_1^2(y_1,y_2)=0$ we obtain that
\begin{equation}\label{valory1}
y_1= \dfrac{8 B D - 4 A^2 Y - Y \omega^2}{4 A^2 + \omega^2},
\end{equation}
with $4 A^2 + \omega^2 \neq 0$ because $\omega>0$. So if we substitute \eqref{valory1} in $F_2^2(y_1,y_2)=0$, then we get a  polynomial $p_2(y_2)$ of degree $4$ in the variable $y_2$, and $p_2(y_2)$ has at most four roots. Therefore the system \eqref{sistemafinalcaso2} has at most four solutions, and consequently the discontinuous piecewise differential system can have at most two limit cycles.

Now we shall prove that the discontinuous piecewise differential system separated by the straight line $x=0$ and formed by the linear center (\ref{sistema2positivo}) and the cubic isochronous center (\ref{sistema2negativo}) obtained from $(S_2^3)$ after the affine change of variables $(x,y)\to (-1 - 2 x - y,1 - 2 x - 2 y)$,  has two limit cycles. 

The first integrals of this piecewise differential system are
$$ 
H_{L2}(x,y) = 4(x-y)^2 + 8 \left( \dfrac{4}{5}x - y \right) + y^2 , \;   H_{2c_1}(x,y)= \dfrac{((-1-2x-y)^2 + (1-2x-2y)^2)^2}{1+ 4 (-1-2x-y) (1-2x-2y)}.
$$
So system \eqref{x1} is equivalent to
\[ 
\begin{aligned}
&-8 + 5 y_1 + 5 y_2=0, \\
& 8 - 104 y_1 + 60 y_1^2 - 75 y_1^3 - 104 y_2 + 220 y_1 y_2 - 155 y_1^2 y_2 + 100 y_1^3 y_2 + 60 y_2^2 - \\
&-155 y_1 y_2^2 - 60 y_1^2 y_2^2 + 200 y_1^3 y_2^2 - 75 y_2^3 + 100 y_1 y_2^3 + 200 y_1^2 y_2^3=0	,
\end{aligned} 
\]
with the solutions
\[ 
\begin{aligned}
&(y_{11}^{2*},y_{12}^{2*})= \left( \dfrac{1}{10} \left( 8 - \sqrt{2(227 - 5 \sqrt{2041})}\right), \dfrac{1}{10} \left( 8 + \sqrt{2(227 - 5 \sqrt{2041})}\right) \right), \\ 
&(y_{21}^{2*},y_{22}^{2*})= \left( \dfrac{1}{10} \left( 8 - \sqrt{2(227 + 5 \sqrt{2041})}\right), \dfrac{1}{10} \left( 8 + \sqrt{2(227 + 5 \sqrt{2041})}\right) \right).
\end{aligned} 
\]
Observe that we have $y_{11}^{2*}<y_{12}^{2*}$ and $y_{21}^{2*}<y_{22}^{2*}$.  

The solution $(x_{11}^2(t), y_{11}^2(t))$ in $x\ge 0$ of system (\ref{sistema2positivo}) such that $(x_{11}^2(0),y_{11}^2(0)) = (0, y_{11}^{2*})$ is contained in the level curve $H_{L2}(0,y_{12}^{2*}) =  \dfrac{1}{2} \left( 39 - \sqrt{2041 }\right) = H_{L2}(0,y_{11}^{2*})$, i.e. in the curve
\[ 
\begin{aligned}
H_{L2}(x,y) = 4(x-y)^2 +8 \left(\frac{4}{5}x -y \right) +y^2= \dfrac{1}{2} \left( 39 - \sqrt{2041 }\right).
\end{aligned} 
\]
The solution $(x_{21}^2(t), y_{21}^2(t))$ in $x\le 0$ of system (\ref{sistema2negativo}) such that $(x_{21}^2(0),y_{21}^2(0)) = (0, y_{12}^{2*})$ is contained in the the level curve $H_{2c_1}(0,y_{12}^{2*})=  \dfrac{1}{14} \left( 263 - 5 \sqrt{2041 }\right) = H_{2c_1}(0,y_{11}^{2*})$, i.e. in the curve 
\[ 
\begin{aligned}
H_{2c_1}(x,y)=  \dfrac{((-1-2x-y)^2 + (1-2x-2y)^2)^2}{1+ 4 (-1-2x-y) (1-2x-2y)}  = \dfrac{1}{14} \left( 263 - 5 \sqrt{2041 }\right).
\end{aligned} 
\]

The solution $(x_{12}^2(t), y_{12}^2(t))$ in $x\ge 0$ of system (\ref{sistema2positivo}) such that $(x_{11}^2(0),y_{11}^2(0)) = (0, y_{21}^{2*})$ is contained in the level $H_{L2}(0,Y_{22}^{2*}) =  \dfrac{1}{2} \left( 39 + \sqrt{2041 }\right) = H_{L2}(0,y_{21}^{2*})$, i.e. in the curve
\[ 
\begin{aligned}
H_{L2}(x,y) = 4(x-y)^2 +8 \left(\frac{4}{5}x -y \right) +y^2= \dfrac{1}{2} \left( 39 + \sqrt{2041 }\right).
\end{aligned} 
\]
The solution $(x_{22}^2(t), y_{22}^2(t))$ in $x\le 0$ of system (\ref{sistema2negativo}) such that $(x_{22}^2(0),y_{22}^2(0)) = (0, Y_{22}^{2*})$ is contained in the level curve $H_{2c_1}(0,Y_{22}^{2*})=  \dfrac{1}{14} \left( 263 + 5 \sqrt{2041 }\right) = H_{2c_1}(0,y_{21}^{2*})$, i.e. in the curve
\[
\begin{aligned}
H_{2c_1}(x,y)= \dfrac{((-1-2x-y)^2 + (1-2x-2y)^2)^2}{1+ 4 (-1-2x-y) (1-2x-2y)}  = \dfrac{1}{14} \left( 263 + 5 \sqrt{2041 }\right).
\end{aligned} 
\]

Drawing the orbits $(x_{kj}^2 (t), y_{kj}^2 (t))$, $k,j=1,2$, we obtain the two limit cyles of Figure \ref{cicloteo2}.
\end{proof}

The proof that $N_{clc}[(S_1^3),(S_1^3)]=1$ is given in the next proposition.

\begin{figure}[!htb]
\begin{center}
\includegraphics[scale=0.3]{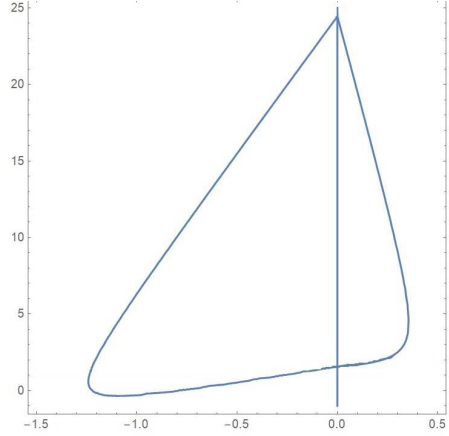}
\caption{The limit cycle of the discontinuous piecewise differential system \eqref{sistema3positivo}-\eqref{sistema3negativo} of Proposition \ref{teo3}.}\label{cicloteo3}
\end{center}
\end{figure}

\begin{prop}\label{teo3}
Consider the class of discontinuous piecewise differential systems separated by the straight line $x=0$ and formed by two distinct cubic isochronous center $(S_1^3)$ after an arbitrary affine change of variables. Then these differential systems can have at most one limit cycles.  Moreover, the discontinuous piecewise differential system in this class formed by the differential system	
\begin{equation}\label{sistema3positivo}
\begin{array}{l}
\dot{x}= - \dfrac{1}{2}(-1 - 4 x + 3 y + 12 x^2 - 4 x y - y^2 - 8 x^3 + 2 x y^2), \vspace{0.2cm} \\
\dot{y}= 1 - 6 x + 2 y + 4 x^2 + 4 x y - 3 y^2 - 4 x^2 y + y^3,
\end{array} 
\end{equation}
in $x\ge 0$	and by the differential system
\begin{equation}\label{sistema3negativo}
\begin{array}{l}
\dot{x}=  \dfrac{1}{2}(23 + 38 x + 7 y + 16 x^2 + 16 x y - 3 y^2 + 8 x^2 y - 2 x y^2), \vspace{0.2cm} \\
\dot{y}= 12 + 12 x + 13 y + 16 x y + 4 x y^2 - y^3, 
\end{array} 
\end{equation}
in $x\le 0$ has one limit cycle, reaching the maximum upper bound. See Figure 3.
\end{prop}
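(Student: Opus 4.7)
The plan is to follow the template of Propositions \ref{teo1} and \ref{teo2}: set up the system \eqref{eqciclos}, factor out the common $(y_1-y_2)$, clear the strictly positive denominators $Q_i(y_1)Q_i(y_2)\neq 0$, and count solutions with $y_1<y_2$.

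The key observation that yields the sharp upper bound $1$, rather than the bound $2$ that a naive Bezout count $2\cdot 2 / 2 = 2$ would produce, is that for the first integral $H_{1c}$ of $(S_1^3)$ the restriction to $x=0$ is a rational function $H_{1c}(0,y)=P(y)/Q(y)$ with $P,Q$ quadratic in $y$. Writing
\[
H_{1c}(0,y_1)-H_{1c}(0,y_2)=\frac{P(y_1)Q(y_2)-P(y_2)Q(y_1)}{Q(y_1)Q(y_2)},
\]
the numerator is antisymmetric of total degree $4$ and so factors as $(y_1-y_2)$ times a symmetric cofactor; a direct expansion shows this cofactor has the shape $\alpha+\beta(y_1+y_2)+\gamma\, y_1 y_2$, i.e.\ it is \emph{affine} in the elementary symmetric functions $s=y_1+y_2$ and $p=y_1 y_2$ (no $y_j^2$ terms appear), exactly as observed for $F_2^1$ in Proposition \ref{teo1}.

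Applying this to each of the two pieces of the discontinuous piecewise differential system, the two equations in \eqref{eqciclos} reduce after the above cancellations to a $2\times 2$ linear system in $(s,p)$. Provided the system is not degenerate to a continuum (which would correspond to a band of periodic orbits, not to limit cycles), it admits a unique solution $(s^{*},p^{*})$, and hence at most one ordered pair $y_1<y_2$, recovered as the roots of $t^2-s^{*}t+p^{*}=0$ whenever $s^{*2}-4p^{*}>0$. This establishes the bound $1$.

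For the realization I would substitute the coefficients of \eqref{sistema3positivo} and \eqref{sistema3negativo} into their first integrals $H_{1c_1},H_{1c_2}$, solve the resulting $2\times 2$ linear system in $(s,p)$ explicitly, check that $s^{*2}-4p^{*}>0$ so that $y_1<y_2$ are real, and verify that the two level-curve arcs in $\{x\geq 0\}$ and $\{x\leq 0\}$ meet at the corresponding points on the $y$-axis and close up (with compatible orientations) into a simple closed curve matching Figure \ref{cicloteo3}. The main obstacle is the bookkeeping with many free affine parameters at once when tracking the symmetric-cofactor computation; this is conceptually routine but prone to hand-computation errors, and is best carried out with a symbolic algebra package.
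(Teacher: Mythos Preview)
Your proposal is correct. The setup---factoring out $(y_1-y_2)$ and clearing the nonzero denominators to reach two symmetric degree-$2$ polynomials $F_1,F_2$ in $(y_1,y_2)$---matches the paper exactly; the difference lies in how the resulting system is solved. The paper solves $F_1^3(y_1,y_2)=0$ for $y_2$ as a M\"obius function $y_2=p_{31}(y_1)/q_{31}(y_1)$ (linear over linear) and substitutes into $F_2^3=0$, obtaining a quadratic in $y_1$; this forces a separate treatment of the degenerate case $q_{31}(y_1)=0$. Your passage to the elementary symmetric functions $s=y_1+y_2$, $p=y_1y_2$ exploits the observation that each $F_i$ is \emph{affine} in $(s,p)$, reducing the problem to a $2\times 2$ linear system and hence a unique $(s^{*},p^{*})$ whenever the coefficient matrix is nonsingular; this is cleaner and avoids the case split. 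The paper in fact uses exactly this symmetric-function change of variables later, in Proposition~\ref{teo5}, so your approach is fully in the spirit of the work. For the realization, your scheme recovers the paper's explicit solution: the two reduced equations are $2-3s+2p=0$ and $-46-7s+6p=0$, giving $(s^{*},p^{*})=(26,38)$ and hence $y_{1,2}=13\mp\sqrt{131}$, after which one checks (as the paper does) that the corresponding level arcs close up into the limit cycle of Figure~\ref{cicloteo3}.
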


\begin{proof} 
Under the assumptions of the proposition if such discontinuous piecewise differential systems have a limit cycle intersecting the discontinuity straight line $x=0$ in the two points $(0,y_1)$ and $(0,y_2)$, then $y_1$ and $y_2$ must satisfy that 
\begin{equation}\label{x2}
H_{3c}(0, y_1) - H_{3c}(0, y_2) = 0, \quad \mbox{and} \quad H_{3_1c}(0, y_1) - H_{3_1c}(0, y_2) = 0.  
\end{equation}
where
\begin{equation*}
H_{3c}(X,Y)= \dfrac{(c + a X + b Y)^2 + (X \alpha + Y \beta + \gamma)^2}{1 + 2 (c + a X + b Y) + (X \alpha + Y \beta + \gamma)},
\end{equation*}
and
\begin{equation*}
H_{3_1c}(X,Y)= \dfrac{(c_1 + a_1 X + b_1 Y)^2 + (X \alpha_1 + Y \beta_1 + \gamma_1)^2}{1 + 2 (c_1 + a_1 X + b_1 Y) + (X \alpha_1 + Y \beta_1 + \gamma_1)}.
\end{equation*}

System \eqref{x2} is
\begin{equation*}
\begin{aligned}
&G_1^3(y_1,y_2)=-(y_1 - y_2) (-2 b c - b^2 y_1 - b^2 y_2 + 2 c^3 \beta + 2 b c^2 y_1 \beta + 2 b c^2 y_2 \beta   \\
&\phantom{G_1^3(y_1,y_2)=}+ 2 b^2 c y_1 y_2 \beta -y_1 \beta^2 - y_2 \beta^2 - 2 c y_1 y_2 \beta^3 - 2 b c^2 \gamma - 2 b^2 c y_1 \gamma - 2 b^2 c y_2 \gamma \\
&\phantom{G_1^3(y_1,y_2)=}-2 b^3 y_1 y_2 \gamma - 2 \beta \gamma - 2 c y_1 \beta^2 \gamma - 2 c y_2 \beta^2 \gamma + 2 b y_1 y_2 \beta^2 \gamma - 2 c \beta \gamma^2 \\ 
&\phantom{G_1^3(y_1,y_2)=}+2 b y_1 \beta \gamma^2 + 2 b y_2 \beta \gamma^2 + 2 b \gamma^3)/((1 + 2 c y_1 \beta + 2 b y_1^2 \beta + 2 c \gamma + 2 b y_1 \gamma) \\
&\phantom{G_1^3(y_1,y_2)=} \; (1 + 2 c y_2 \beta + 2 b y_2^2 \beta + 2 c \gamma + 2 b y_2 \gamma))=0, \\
&G_2^3(y_1,y_2)=-(y_1 - y_2) (-2 b_1 c_1 - b_1^2 y_1 - b_1^2 y_2 + 2 c_1^3 \beta_1 +	2 b_1 c_1^2 y_1 \beta_1 + 2 b_1 c_1^2 y_2 \beta_1  \\
&\phantom{G_2^3(y_1,y_2)=}+2 b_1^2 c_1 y_1 y_2 \beta_1 - y_1 \beta_1^2 - y_2 \beta_1^2 - 2 c_1 y_1 y_2 \beta_1^3 - 2 b_1 c_1^2 \gamma_1 - 2 b_1^2 c_1 y_1 \gamma_1 \\
&\phantom{G_2^3(y_1,y_2)=}- 2 b_1^2 c_1 y_2 \gamma_1 - 2 b_1^3 y_1 y_2 \gamma_1 - 2 \beta_1 \gamma_1 - 2 c_1 y_1 \beta_1^2 \gamma_1 - 2 c_1 y_2 \beta_1^2 \gamma_1 \\
&\phantom{G_2^3(y_1,y_2)=}+2 b_1 y_1 y_2 \beta_1^2 \gamma_1 - 2 c_1 \beta_1 \gamma_1^2 + 2 b_1 y_1 \beta_1 \gamma_1^2 + 2 b_1 y_2 \beta_1 \gamma_1^2 + 2 b_1 \gamma_1^3))/\\
&\phantom{G_2^3(y_1,y_2)=}((1 + 2 c_1 y_1 \beta_1 + 2 b_1 y_1^2 \beta_1 + 2 c_1 \gamma_1 + 2 b_1 y_1 \gamma_1)\\
&\phantom{G_2^3(y_1,y_2)=}(1 + 2 c_1 y_2 \beta_1 + 2 b_1 y_2^2 \beta_1 + 2 c_1 \gamma_1 + 2 b_1 y_2 \gamma_1))=0,
\end{aligned}
\end{equation*}
Since the denominators in the previous system cannot be zero and $y_1< y_2$, we obtain the equivalent system
\begin{equation}\label{sistemageral3}
\begin{aligned}
&F_1^3(y_1,y_2)= -2 b c - b^2 y_1 - b^2 y_2 + 2 c^3 \beta + 2 b c^2 y_1 \beta + 2 b c^2 y_2 \beta + 2 b^2 c y_1 y_2\beta  \\
&\phantom{G_1^3(y_1,y_2)=}  -y_1 \beta^2 - y_2 \beta^2 - 2 c y_1 y_2 \beta^3 - 2 b c^2 \gamma - 2 b^2 c y_1 \gamma - 2 b^2 c y_2 \gamma  \\
&\phantom{G_1^3(y_1,y_2)=}-2 b^3 y_1 y_2 \gamma - 2 \beta \gamma - 2 c y_1 \beta^2 \gamma - 2 c y_2 \beta^2 \gamma + 2 b y_1 y_2 \beta^2 \gamma - 2 c \beta \gamma^2 \\ 
&\phantom{G_1^3(y_1,y_2)=}+2 b y_1 \beta \gamma^2 + 2 b y_2 \beta \gamma^2 + 2 b \gamma^3=0, \\
&F_2^3(y_1,y_2)= -2 b_1 c_1 - b_1^2 y_1 - b_1^2 y_2 + 2 c_1^3 \beta_1 +	2 b_1 c_1^2 y_1 \beta_1 + 2 b_1 c_1^2 y_2 \beta_1  \\
&\phantom{G_2^3(y_1,y_2)=}+2 b_1^2 c_1 y_1 y_2 \beta_1 - y_1 \beta_1^2 - y_2 \beta_1^2 - 2 c_1 y_1 y_2 \beta_1^3 - 2 b_1 c_1^2 \gamma_1 - 2 b_1^2 c_1 y_1 \gamma_1 \\
&\phantom{G_2^3(y_1,y_2)=}- 2 b_1^2 c_1 y_2 \gamma_1 - 2 b_1^3 y_1 y_2 \gamma_1 - 2 \beta_1 \gamma_1 - 2 c_1 y_1 \beta_1^2 \gamma_1 - 2 c_1 y_2 \beta_1^2 \gamma_1 \\
&\phantom{G_2^3(y_1,y_2)=}+2 b_1 y_1 y_2 \beta_1^2 \gamma_1 - 2 c_1 \beta_1 \gamma_1^2 + 2 b_1 y_1 \beta_1 \gamma_1^2 + 2 b_1 y_2 \beta_1 \gamma_1^2 + 2 b_1 \gamma_1^3=0.
\end{aligned}
\end{equation}

The system \eqref{sistemageral3} could have a continuum of solutions $(y_1,y_2)$, but then the possible periodic solutions would not be limit cycles.  Therefore we assume that this system  has finitely many solutions.

Assume that $q_{31}(y_1)\ne 0$. From the equation $F_1^4(y_1,y_2)=0$ we obtain
\begin{equation}\label{valory2}
\begin{aligned}
& y_2= \dfrac{p_{31}(y_1)}{q_{31}(y_1)},
\end{aligned}
\end{equation}	
where 
\[ 
\begin{aligned}
& p_{31}(y_1)= -2 b c + 2 c^3 \beta - 2 b c^2 \gamma - 2 \beta \gamma - 2 c \beta \gamma^2 + 2 b \gamma^3 + \\
& \phantom{p_{31}(y_1)=}+y_1 (-b^2 + 2 b c^2 \beta - \beta^2 - 2 b^2 c \gamma - 2 c \beta^2 \gamma + 2 b \beta \gamma^2) , \\
& q_{31}(y_1)= b^2 - 2 b c^2 \beta + \beta^2 + 2 b^2 c \gamma + 2 c \beta^2 \gamma - 2 b \beta \gamma^2 + \\
& \phantom{q_{31}(y_1)=} + y_1 (-2 b^2 c \beta + 2 c \beta^3 + 2 b^3 \gamma - 2 b \beta^2 \gamma).
\end{aligned} 
\]
We substitute \eqref{valory2} in $F_2^3(y_1,y_2)=0$, and we get a rational function. The polynomial $p_3(y_1)$ in the numerator has degree $2$. Therefore $p_3(y_1)$ has at most two real roots, and consequently system \eqref{sistemageral3} has at most two solutions. Hence the discontinuous piecewise differential system has at most one limit cycle.

Now assume that $q_{31}(y_1)= 0$. Then also $p_{31}(y_1)= 0$, and then at most one solution for system \eqref{sistemageral3}. 
	
In summary, the discontinuous piecewise differential systems of this propositon have at most one limit cycle. Now we shall prove that the discontinuous piecewise differential system formed by the differential systems (\ref{sistema3positivo}) and (\ref{sistema3negativo}) has one limit cycle. 

In $x\ge 0$ the differential system (\ref{sistema3positivo}) comes from the cubic isochronous center $(S_1^3)$ after doing the affine change of variables $(x,y)\to (1-y,1-2 x)$, and in $x\le 0$ the differential system (\ref{sistema3negativo}) also comes from the cubic isochronous center $(S_1^3)$ after doing the affine change of variables $(x,y)\to (3+2x,1+2 x-y)$. The first integrals of these two differential system  are
\begin{equation*}		
H_{3c_1}(x,y) = \dfrac{(1-y)^2+ (1-2x)^2}{1 + 2  (1-y)(1-2x)}, \;  \mbox{and} \;   H_{3_1c_1}(x,y)= \dfrac{(3 + 2 x)^2 + (1 + 2 x - y)^2}{1 + 2 (3 + 2 x) (1 + 2 x - y)},
\end{equation*}
respectively. Then system (\ref{sistemageral3}) for this piecewise differential system becomes
\[ 
\begin{aligned}
& 2 - 3 y_1 - 3 y_2 + 2 y_1 y_2=0,\\
& -46 - 7 y_1 - 7 y_2 + 6 y_1 y_2 =0.
\end{aligned} 
\]
Its solution is $(y_1^{3*},y_2^{3*})= \left(13 - \sqrt{131}, 13 + \sqrt{131}\right)$, observe that we have $y_1^{3*}<y_2^{3*}$.  

The solution $(x_1^3(t), y_1^3(t))$ in $x\ge 0$ of system (\ref{sistema3positivo}) such that $(x_1^3(0),y_1^3(0)) = (0, y_1^{3*})$ is contained in the level curve $H_{3c_1}(0,y_2^{3*})= -12 = H_{3c_1}(0,y_1^{3*})$, i.e. in the curve
\[ 
\begin{aligned}
H_{3c_1}(x,y) = \dfrac{(1-y)^2+ (1-2x)^2}{1 + 2  (1-y)(1-2x)} = -12.
\end{aligned} 
\]
The solution $(x_2^3(t), y_2^3(t))$ in $x\le 0$ of system (\ref{sistema3negativo}) such that $(x_2^3(0),y_2^3(0)) = (0,y_2^{3*})$ is contained in the level curve $H_{3_1c_1}(0,y_2^{3*})= -4 = H_{3_1c_1}(0,y_1^{3*})$, i.e. in the curve
\[ 
\begin{aligned}
H_{3_1c_1}(x,y)= \dfrac{(3 + 2 x)^2 + (1 + 2 x - y)^2}{1 + 2 (3 + 2 x) (1 + 2 x - y)}  = - 4.
\end{aligned} 
\]

Drawing the orbits $(x_k^3 (t), y_k^3 (t))$, $k=1,2$, we obtain the limit cycle of Figure \ref{cicloteo3}.
\end{proof}

\begin{figure}[!htb]
\begin{center}
\includegraphics[scale=0.4]{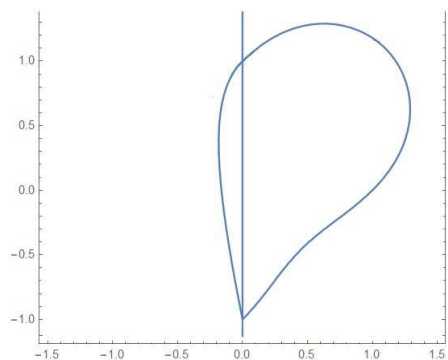}
\caption{The limit cycle of the discontinuous piecewise differential system \eqref{sistema4positivo}-\eqref{sistema4negativo} of Proposition \ref{teo4}.}\label{cicloteo4}
\end{center}
\end{figure}

The proof that $N_{clc}[(S_1^3),(S_2^3)]\le 3$ is given in the next proposition.

\begin{prop}\label{teo4}
Consider the class of discontinuous piecewise differential systems separated by the straight line $x=0$ and formed by two cubic isochronous centers of type $(S_1^3)$ and $(S_2^3)$ after an arbitrary affine change of variables. Then these differential systems can have at most three limit cycles.  Moreover, the discontinuous piecewise differential system in this class formed by the differential system		
\begin{equation}\label{sistema4positivo}
\begin{array}{l}
\dot{x}= -1 - x + 3 y + 3 x^2 + 4 x y - y^2 + x^3 - x y^2, \\
\dot{y}= 7 - 3 x - 11 y - 2 x^2 + 2 x y + 6 y^2 + x^2 y - y^3,
\end{array}, 
\end{equation}
in $x\ge 0$, and by the differential system
\begin{equation}\label{sistema4negativo}
\dot{x}=  y-x^3  + 3 x y^2, \qquad \dot{y}= -x - 3 x^2 y + y^3,
\end{equation}
in $x\le 0$, has one limit cycle.
\end{prop}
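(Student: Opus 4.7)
The plan is to adapt the strategy of Propositions~\ref{teo1}--\ref{teo3}. If the discontinuous piecewise system has a crossing limit cycle intersecting $x=0$ at $(0,y_1)$ and $(0,y_2)$ with $y_1<y_2$, then $H_{1c}(0,y_1)=H_{1c}(0,y_2)$ and $H_{2c}(0,y_1)=H_{2c}(0,y_2)$. Combining each equation over a common denominator, the numerator is antisymmetric in $(y_1,y_2)$ so the factor $(y_1-y_2)$ cancels; since the denominators cannot vanish on the periodic orbit, the two conditions reduce to a polynomial system $F_1^4(y_1,y_2)=0$, $F_2^4(y_1,y_2)=0$, where $F_1^4$ has exactly the shape of the $(S_1^3)$-polynomial $F_2^1$ from Proposition~\ref{teo1} and $F_2^4$ has exactly the shape of the $(S_2^3)$-polynomial $P_2^2$ from Proposition~\ref{teo2}. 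Both are symmetric in $y_1,y_2$.

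The decisive point is an individual-degree count. Because the numerator of $H_{1c}(0,Y)$ is quadratic and the denominator linear in $Y$, $F_1^4$ is symmetric of total degree $2$ and of degree $1$ in each variable separately. Because the numerator of $H_{2c}(0,Y)$ is quartic and the denominator quadratic in $Y$, the antisymmetric difference divided by $(y_1-y_2)$ produces $F_2^4$ with total degree $5$ but degree at most $3$ in each variable separately; this refined estimate is already visible in the explicit polynomial computed in Proposition~\ref{teo2}. In the generic case the linear-in-$y_2$ equation $F_1^4=0$ can be solved as $y_2=p(y_1)/q(y_1)$ with $p,q$ of degree~$1$; substituting into $F_2^4=0$ and clearing denominators by multiplying by $q(y_1)^3$ yields a polynomial $\mathcal{R}(y_1)$ of degree at most $i+j+(3-j)=i+3\le 6$, where $i\le 3$ bounds the individual degree of $F_2^4$ in $y_1$. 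Hence the system admits at most six ordered solutions $(y_1,y_2)$; the symmetry $y_1\leftrightarrow y_2$ identifies these in pairs, leaving at most three with $y_1<y_2$, which proves the upper bound. Degenerate situations (where $q\equiv 0$, or where $F_2^4$ vanishes identically along $F_1^4=0$) either reduce to a continuum of solutions, which by the discussion in the preliminaries does not yield limit cycles, or to strictly fewer solutions, so the bound of $3$ is preserved.

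The main technical obstacle is precisely this individual-degree estimate on $F_2^4$: a naive application of B\'ezout's theorem gives $\deg F_1^4\cdot\deg F_2^4=2\cdot 5=10$, which after the symmetry division only bounds the number of limit cycles by $5$; it is the bound $i\le 3$ (rather than $i\le 5$) that upgrades this to $6/2=3$. For the realization, (\ref{sistema4positivo}) is obtained from $(S_1^3)$ by an explicit affine change of coordinates, while (\ref{sistema4negativo}) is $(S_2^3)$ with reversed time, so its first integral is $H_2(x,y)=(x^2+y^2)^2/(1+4xy)$. Since $H_2(0,y)=y^4$, the condition $H_2(0,y_1)=H_2(0,y_2)$ together with $y_1\ne y_2$ and $y_1^2+y_2^2>0$ forces $y_2=-y_1$; plugging $y_2=-y_1$ into the first condition $H_{1c}(0,y_1)=H_{1c}(0,-y_1)$ produces a polynomial equation in $y_1$ whose unique admissible root supplies the pair $(y_1^{\ast},y_2^{\ast})$ with $y_1^{\ast}<y_2^{\ast}$, and the two corresponding level curves glue along $x=0$ into the closed orbit of Figure~\ref{cicloteo4}.
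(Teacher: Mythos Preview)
Your proposal is correct and follows essentially the same route as the paper: reduce to the polynomial system $F_1^4=0$, $F_2^4=0$, solve the first for $y_2=p(y_1)/q(y_1)$ with $p,q$ linear, substitute into the second to obtain a univariate polynomial of degree~$6$, and halve by symmetry. The paper simply asserts that the substituted numerator has degree~$6$ (presumably by direct computation), whereas you supply the conceptual reason---the individual degree of $F_2^4$ in each variable is at most~$3$, so clearing by $q(y_1)^3$ suffices---which is a welcome clarification but not a different argument. Your treatment of the example is likewise the same as the paper's: the paper writes the $(S_2^3)$ equation as $(y_1+y_2)(y_1^2+y_2^2)=0$ and the $(S_1^3)$ equation as $2-3y_1-3y_2+2y_1y_2=0$, obtaining the explicit solution $(y_1^{4*},y_2^{4*})=(-1,1)$, which is exactly what your substitution $y_2=-y_1$ produces.
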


\begin{proof}  
Under the assumptions of the proposition if such discontinuous piecewise differential systems have a limit cycle intersecting the discontinuity straight line $x=0$ in the two points $(0,y_1)$ and $(0,y_2)$, then $y_1$ and $y_2$ must satisfy that 
\begin{equation}\label{x3}
H_{1c}(0,y_1)- H_{1c}(0,y_2)= 0, \quad \mbox{and} \quad H_{2_1c}(0, y_1) - H_{4_2c}(0, y_2) = 0
\end{equation}
where
\begin{equation*}
H_{1c}(x,y)= \dfrac{(c + a x + b y)^2 + (x \alpha + y \beta + \gamma)^2}{1 + 2 (c + a x + b y) (x \alpha + y \beta + \gamma)},
\end{equation*}
and
\begin{equation*}
H_{2_1c}(x,y)= \dfrac{((c_1 + a_1 X + b_1 Y)^2 + (X \alpha_1 + Y \beta_1 + \gamma_1)^2)^2}{1 + 4 (c_1 + a_1 X + b_1 Y) (X \alpha_1 + Y \beta_1 + \gamma_1)}.
\end{equation*}
Then system \eqref{x3} becomes
\begin{equation*}
\begin{aligned}
&G_1^4(y_1,y_2)=-(y_1 - y_2) (-2 b c - b^2 y_1 - b^2 y_2 + 2 c^3 \beta + 2 b c^2 y_1 \beta + 2 b c^2 y_2 \beta  \\
&\phantom{G_1^4(y_1,y_2)=}+2 b^2 c y_1 y_2 \beta - y_1 \beta^2 - y_2 \beta^2 - 2 c y_1 y_2 \beta^3 - 2 b c^2 \gamma - 2 b^2 c y_1 \gamma - 2 b^2 c y_2 \gamma \\
&\phantom{G_1^4(y_1,y_2)=}- 2 b^3 y_1 y_2 \gamma - 2 \beta \gamma - 2 c y_1 \beta^2 \gamma - 2 c y_2 \beta^2 \gamma + 2 b y_1 y_2 \beta^2 \gamma - 2 c \beta \gamma^2 \\ 
&\phantom{G_1^4(y_1,y_2)=}+2 b y_1 \beta \gamma^2 + 2 b y_2 \beta \gamma^2 + 2 b \gamma^3)/((1 + 2 c y_1 \beta + 2 b y_1^2 \beta + 2 c \gamma + 2 b y_1 \gamma) \\
&\phantom{G_1^4(y_1,y_2)=} \;(1 + 2 c y_2 \beta + 2 b y_2^2 \beta + 2 c \gamma + 2 b y_2 \gamma)))=0, \\
&G_2^4(y_1,y_2)=-(((y_1 - y_2)(G_{22}^4(y_1,y_2))))/\\
&\phantom{G_2^4(y_1,y_2)=} \; ((1 + 4 c_1 y_1 \beta_1 + 4 b_1 y_1^2 \beta_1 + 4 c_1 \gamma_1 + 4 b_1 y_1 \gamma_1) \\
& \phantom{G_2^4(y_1,y_2)=} \; (1 + 4 c_1 y_2 \beta_1 + 4 b_1 y_2^2 \beta_1 + 4 c_1 \gamma_1 + 4 b_1 y_2 \gamma_1))=0,
\end{aligned}
\end{equation*}
where $G_{22}^4(y_1,y_2)$ ís a  polynomial of degree $5$.
Since the denominators in the previous system cannot be zero and $y_1<y_2$, the previous system reduces to the system
\begin{equation}\label{sistemageral4}
\begin{aligned}
&F_1^4(y_1,y_2)= -2 b c - b^2 y_1 - b^2 y_2 + 2 c^3 \beta + 2 b c^2 y_1 \beta + 2 b c^2 y_2 \beta +2 b^2 c y_1 y_2 \beta \\
&\phantom{G_1^4(y_1,y_2)=}- y_1 \beta^2 - y_2 \beta^2 - 2 c y_1 y_2 \beta^3 - 2 b c^2 \gamma - 2 b^2 c y_1 \gamma - 2 b^2 c y_2 \gamma \\
&\phantom{G_1^4(y_1,y_2)=}- 2 b^3 y_1 y_2 \gamma - 2 \beta \gamma - 2 c y_1 \beta^2 \gamma - 2 c y_2 \beta^2 \gamma + 2 b y_1 y_2 \beta^2 \gamma - 2 c \beta \gamma^2 \\ 
&\phantom{G_1^4(y_1,y_2)=}+2 b y_1 \beta \gamma^2 + 2 b y_2 \beta \gamma^2 + 2 b \gamma^3, \\
&F_2^4(y_1,y_2)= G_2^4(y_1,y_2)=0.
\end{aligned}
\end{equation}
Eventually the system \eqref{sistemageral4} could have a continuum of solutions $(y_1,y_2)$, but then the possible periodic solutions would not be limit cycles. Therefore we assume that this system  has finitely many solutions. 

Assume that $q_{41}(y_1) \neq 0$. From equation $F_1^4(y_1,y_2)=0$ we get
\begin{equation}\label{valory2teo4}
y_2= \dfrac{p_{41}(y_1)}{q_{41}(y_1)},
\end{equation}
where 
\[ 
\begin{aligned}
& p_{41}(y_1)= -2 b c + 2 c^3 \beta - 2 b c^2 \gamma - 2 \beta \gamma - 2 c \beta \gamma^2 + 2 b \gamma^3 + \\
& \phantom{p_{41}(y_1)=}+y_1 (-b^2 + 2 b c^2 \beta - \beta^2 - 2 b^2 c \gamma - 2 c \beta^2 \gamma + 2 b \beta \gamma^2), \\
& q_{41}(y_1)= \; b^2 - 2 b c^2 \beta + \beta^2 + 2 b^2 c \gamma + 2 c \beta^2 \gamma - 2 b \beta \gamma^2 + \\
& \phantom{q_{41}(y_1)=} + y (-2 b^2 c \beta + 2 c \beta^3 + 2 b^3 \gamma - 2 b \beta^2 \gamma).
\end{aligned} 
\]
We substitute \eqref{valory2teo4} in $F_2^4(y_1,y_2)=0$, and we get a rational function. The polynomial $p_4(y_1)$ of the numerator has degree $6$. Therefore  $p_4(y_1)$ has at most six roots, and consequently system  \eqref{sistemageral4} has at most six solutions. Therefore the discontinuous piecewise differential system has at most three limit cycles.

Assume that $q_{41}(y_1)= 0$. Then $p_{41}(y_1)= 0$, and there is at most one solution for $y_1$, and consequently the discontinuous piecewise differential system has at most one limit cycle in this case.
	
In summary, we have proved these discontinuous piecewise differential systems have at most three limit cycles. Now we shall prove that the differential system formed by the differential systems (\ref{sistema4positivo}) and (\ref{sistema4negativo}) has one limit cycle. 

In $x\ge 0$ the differential system (\ref{sistema4positivo}) comes from the cubic isochronous center $(S_1^3)$ after doing the affine change of variables $(x,y)\to (1 + x,-2 + y)$, and in $x\le 0$ the differential system (\ref{sistema4negativo})  comes from the cubic isochronous center $(S_2^3)$ after reversing the independent variable of sign. The first integrals of these two differential system are
\begin{equation*}
H_{1c}(x,y) = \dfrac{(1 + x)^2 + (-2 + y)^2}{1 + 2 (1 + x) (-2 + y)}, \quad \mbox{and} \quad H_{2c}(x,y)= \dfrac{(x^2 + y^2)^2}{1 + 4xy},  
\end{equation*}
respectively.  Therefore system \eqref{x3} becomes
\[ 
\begin{aligned}
& 2 - 3 y_1 - 3 y_2 + 2 y_1 y_2=0,\\
& (y_1 + y_2) (y_1^2 + y_2^2) =0.
\end{aligned} 
\]
Its solution is $(y_1^{4*},y_2^{4*})= \left(-1, 1 \right)$, note that $y_1^{4*}<y_2^{4*}$.  

The solution $(x_1^4(t), y_1^4(t))$ in $x\ge 0$ of system \eqref{sistema4positivo} such that $(x_1^4(0),y_1^4(0)) = (0, y_1^{4*})$ is contained in the level curve $H_{2c}(0,y_2^{4*})= 1 = H_{2c}(0,y_1^{4*})$, i.e. in the curve
\[ 
\begin{aligned}
H_{2c}(x,y) = \dfrac{(x^2 + y^2)^2}{1 + 4xy} = 1.
\end{aligned} 
\]
The solution $(x_2^4(t), y_2^4(t))$ in $x\le 0$ of system \eqref{sistema4negativo} with $(x_2^4(0),y_2^4(0)) = (0,y_2^{4*})$ is contained in level curve $H_{1c}(0,y_2^{4*})= -2 = H_{1c}(0,y_1^{4*})$, i.e. in the curve
\[ 
\begin{aligned}
H_{1c}(x,y)= \dfrac{(1 + x)^2 + (-2 + y)^2}{1 + 2 (1 + x) (-2 + y)} = - 2.
\end{aligned} \]
that is,

Drawing the orbits $(x_k^4(t), y_k^4 (t))$, $k=1,2$, we obtain the limit cycle of Figure \ref{cicloteo4}.  
\end{proof}

The problem if the upper bound of Proposition \ref{teo4} for the maximum number of limit cycles of that class of discontinuous piecewise differential systems is reached or not remains open.

\begin{figure}[!htb]
\begin{center}
\includegraphics[scale=0.4]{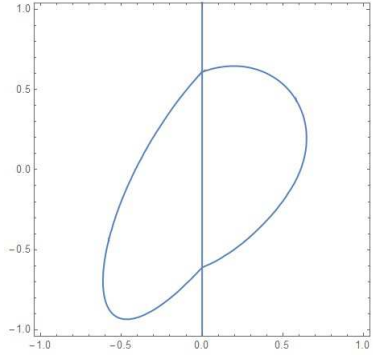}
\caption{The limit cycle of the discontinuous piecewise differential system \eqref{sistema5positivo}-\eqref{sistema5negativo} of Proposition \ref{teo5}.}\label{cicloteo5}
\end{center}
\end{figure}

The proof that $N_{clc}[(S_2^3),(S_2^3)]\le 9$ is given in the next proposition.

\begin{prop}\label{teo5}
Consider the class of discontinuous piecewise differential systems separated by the straight line $x=0$ and formed by two distinct cubic isochronous center $(S_2^3)$ after an arbitrary affine change of variables. Then these differential systems can have at most nine limit cycles.  Moreover, the discontinuous piecewise differential system in this class formed by the differential system		
\begin{equation}\label{sistema5positivo}
\dot{x}= -y + x^3 - 3 x y^2, \qquad \dot{y}=x + 3 x^2 y - y^3,
\end{equation}
in $x\ge 0$, and the differential system
\begin{equation}\label{sistema5negativo}
\begin{array}{l}
\dot{x}= -1 - 2 x^3 - 5 y - 3y^2 + x^2 (6 + 6 y) + x (5 - 3 y^2), \\
\dot{y}=  -4 x^3 - 5 y + 6 x^2 y - 6 y^2 - y^3 + x (10 + 12 y),
\end{array}
\end{equation}
in $x\le 0$ has one limit cycle.
\end{prop}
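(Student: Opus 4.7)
The plan is to extend the arguments of Propositions \ref{teo1}--\ref{teo4} by using the first integral of type $H_{2c}$ for both pieces of the discontinuous system, one for $x \ge 0$ and one for $x \le 0$, each carrying its own affine parameters $a, b, c, \alpha, \beta, \gamma$ and $a_1, b_1, c_1, \alpha_1, \beta_1, \gamma_1$. If a crossing limit cycle intersects $x = 0$ at the two points $(0, y_1)$ and $(0, y_2)$ with $y_1 < y_2$, then \eqref{eqciclos} produces two equations. After dividing out the common nonzero factor $(y_1 - y_2)$ and clearing the denominators---neither of which vanishes on the orbits of a crossing periodic solution---we obtain a polynomial system $F_1^5(y_1, y_2) = 0$, $F_2^5(y_1, y_2) = 0$, where each $F_i^5$ is symmetric in $(y_1, y_2)$.

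The key step is a sharp degree count. Each $(S_2^3)$-type first integral evaluated at $x = 0$ has the form $H_{2c}(0, y) = N(y)/D(y)$ with $\deg N = 4$ and $\deg D = 2$. Hence $N(y_1)D(y_2) - N(y_2)D(y_1)$ is antisymmetric, of bidegree $(4, 4)$ and total $y$-degree $6$; removing the factor $(y_1 - y_2)$ yields a symmetric polynomial $F_i^5$ of bidegree $(3, 3)$ and total $y$-degree $5$.

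Following the suggestion at the end of Section 2, we pass to the elementary symmetric variables $z = y_1 + y_2$, $w = y_1 y_2$. A monomial $z^a w^b$ has bidegree $(a+b, a+b)$ and $y$-total degree $a + 2b$, so the above constraints force $a + b \le 3$ and $a + 2b \le 5$. The admissible pairs $(a, b)$ are $(0,0)$, $(1,0)$, $(2,0)$, $(3,0)$, $(0,1)$, $(1,1)$, $(2,1)$, $(0,2)$, $(1,2)$, none exceeding total $(z,w)$-degree $3$. Consequently $F_1^5$ and $F_2^5$, rewritten in $(z, w)$, are polynomials of total degree at most $3$. Assuming that the system has finitely many solutions (otherwise no crossing limit cycle exists), B\'ezout's theorem gives at most $3 \cdot 3 = 9$ common zeros in the $(z, w)$-plane. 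Since each zero with $z^2 > 4w$ produces at most one ordered real pair $(y_1, y_2)$ with $y_1 < y_2$, we obtain the claimed upper bound of nine crossing limit cycles.

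For the realization with systems \eqref{sistema5positivo} and \eqref{sistema5negativo}, the right-hand piece is exactly $(S_2^3)$, giving $H^1(0, y) = y^4$; the first equation then factors as $(y_1 + y_2)(y_1^2 + y_2^2) = 0$, which reduces to $y_1 + y_2 = 0$ since $y_1 \ne y_2$. For the left-hand piece we first exhibit an explicit affine change of variables taking $(S_2^3)$ to \eqref{sistema5negativo}, yielding a first integral $H^2$ of the form $H_{2c}$; substituting $y_1 = -y_2$ into $H^2(0, y_1) = H^2(0, y_2)$ reduces the second equation to a polynomial condition on $y_2$ admitting exactly one admissible real solution with $y_2 > 0$, and drawing the two orbits through the corresponding points produces the closed curve of Figure \ref{cicloteo5}. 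The main obstacle in the upper bound is confirming, by direct expansion, that no hidden monomial of total $(z,w)$-degree exceeding $3$ arises in $F_i^5$, so that the B\'ezout count indeed yields the sharp value $9$.
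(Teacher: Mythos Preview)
Your proposal is correct and follows essentially the same route as the paper: clear denominators and the factor $(y_1-y_2)$ to obtain two symmetric polynomials of total degree $5$ in $(y_1,y_2)$, pass to the elementary symmetric variables $z=y_1+y_2$, $w=y_1y_2$ to get two cubics, and apply B\'ezout to obtain the bound $9$; the example is handled identically by reducing the first equation to $y_1+y_2=0$ and solving the second. The only stylistic difference is that you justify the degree-$3$ bound in $(z,w)$ via the clean bidegree/total-degree count $a+b\le 3$, $a+2b\le 5$, whereas the paper writes out the explicit symmetric form of $A_i^5$ and substitutes directly.
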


\begin{proof}
Under the assumptions of the proposition if such discontinuous piecewise differential systems have a limit cycle intersecting the discontinuity straight line $x=0$ in the two points $(0,y_1)$ and $(0,y_2)$, then $y_1$ and $y_2$ must satisfy that 
\begin{equation}\label{x4}
H_{2c}(0,y_1)- H_{2c}(0,y_2)= 0, \quad \mbox{and} \quad H_{2_1c} (0, y_1) - H_{2_1c} (0, y_2) = 0,
\end{equation}
where
\begin{equation*}
H_{2c}(x,y)= \dfrac{((c + a x + b y)^2 + (x \alpha + y \beta + \gamma)^2)^2}{1 + 4 (c + a xX + b y) (x \alpha + y \beta + \gamma)},
\end{equation*}
and
\begin{equation*}
H_{2_1c}(x,y)= \dfrac{((c_1 + a_1 x + b_1 y)^2 + (x \alpha_1 + y \beta_1 + \gamma_1)^2)^2}{1 + 4 (c_1 + a_1 x + b_1 y) (x \alpha_1 + y \beta_1 + \gamma_1)}.
\end{equation*}
Then system \eqref{x4} becomes
\begin{equation*}
\begin{aligned}
&G_1^5(y_1,y_2)=-(((y_1 - y_2)A_1^5(y_1,y_2)))/\\
&\phantom{G_2^4(y_1,y_2)=} \; ((1 + 4 c y_1 \beta + 4 b y_1^2 \beta + 4 c \gamma + 4 b y_1 \gamma) \\
& \phantom{G_2^4(y_1,y_2)=} \; (1 + 4 c y_2 \beta + 4 b y_2^2 \beta + 4 c \gamma + 4 b y_2 \gamma)))=0, \\
&G_2^5(y_1,y_2)=-(((y_1 - y_2)A_2^5(y_1,y_2)))/\\
&\phantom{G_2^4(y_1,y_2)=} \; ((1 + 4 c_1 y_1 \beta_1 + 4 b_1 y_1^2 \beta_1 + 4 c_1 \gamma_1 + 4 b_1 y_1 \gamma_1) \\
& \phantom{G_2^4(y_1,y_2)=} \; (1 + 4 c_1 y_2 \beta_1 + 4 b_1 y_2^2 \beta_1 + 4 c_1 \gamma_1 + 4 b_1 y_2 \gamma_1)))=0,
\end{aligned}
\end{equation*}
where $A_{i}^5(y_1,y_2)$ for $i=1,2$ are polynomials of degree $5$. Since the denominators of the previous system cannot be zero and $y_1<y_2$, this system is equivalent to the system  
\begin{equation*}\label{sistemageral5}
A_1^5(y_1,y_2)=0, \qquad A_2^5(y_1,y_2)= 0.
\end{equation*}
where
\begin{equation}\label{sistemageral51}
\begin{aligned}
&A_1^5(y_1,y_2)= A_0 +A_1(y_1+y_2)+A_2(y_1^2+y_2^2)+A_3y_1y_2+ A_4(y_1^3+y_2^3) \\
&\phantom{F_2^5(y_1,y_2)=}+A_5(y_1^2y_2+y_1y_2^2) + A_6(y_1^3y_2+y_1y_2^3)+A_7 y_1^2 y_2^2 +A_8(y_1^3y_2^2+y_1^2y_2^3), \\
&A_2^5(y_1,y_2)= B_0 +B_1(y_1+y_2)+B_2(y_1^2+y_2^2)+B_3y_1y_2+ B_4(y_1^3+y_2^3) \\
&\phantom{F_2^5(y_1,y_2)=}+B_5(y_1^2y_2+y_1y_2^2) + B_6(y_1^3y_2+y_1y_2^3)+B_7 y_1^2 y_2^2 +B_8(y_1^3y_2^2+y_1^2y_2^3).
\end{aligned}
\end{equation}
The above polynomials can be rewritten in the variables $z$ and $w$, where $z = y_1 + y_2$ and $w = y_1y_2$. For example, $ y_1 ^ 2 + y_2 ^ 2 = (y_1 + y_2) ^ 2-2 y_1 y_2 = z ^ 2-2w $. In these new variables system \eqref{sistemageral51} becomes
\begin{equation}\label{sistemageral52}
\begin{aligned}
&P_1^5(z,w)= A_0 +A_1z+(-2A_2+A_3)w+A_4z^3+(A_5-3A_4)zw+A_2z^2+A_6z^2w+\\
&\phantom{F_2^5(y_1,y_2)=} +(-2A_6+A_7)w^2+A_8zw^2 =0, \\
&P_1^5(z,w)= B_0 +B_1z+(-2B_2+B_3)w+B_4z^3+(B_5-3B_4)zw+B_2z^2+B_6z^2w+\\
&\phantom{F_2^5(y_1,y_2)=} +(-2B_6+B_7)w^2+B_8zw^2 =0.
\end{aligned}
\end{equation}
	
Eventually system \eqref{sistemageral52} could have a continuum of solutions $(z,w)$, but then the possible periodic solutions would not be limit cycles. So we assume that this system has a finite number of solutions. The two equations in system \eqref{sistemageral52} are polynomials of degree $3$, then by Bezout Theorem the discontinuous piecewise differential system has a maximum of $9$ limit cycles.

Now we will prove that the discontinuous piecewise differential system formed by the differential systems \eqref{sistema5positivo} and \eqref{sistema5negativo} has one limit cycle.

In $x\ge 0$ the differential system (\ref{sistema5positivo}) is the cubic isochronous center $(S_2^3)$, and in $x\le 0$ the differential system (\ref{sistema5negativo}) comes from the cubic isochronous center $(S_2^3)$ after the affine change of variables $(x,y)\to (1 + x,-1 + x - y)$. The first integrals of these two differential system are
\begin{equation*}
H_{2c}(x,y)= \dfrac{(x^2 + y^2)^2}{1 + 4xy}, \quad  \mbox{and} \quad  H_{2_1c}(x,y) = \dfrac{((1 + x)^2 + (-1 + x - y)^2)^2}{1 + 4 (1 + x) (-1 + x - y)},
\end{equation*}
respectively. So system \eqref{x4} becomes
\[ 
\begin{aligned}
& (y_1 + y_2) (y_1^2 + y_2^2)=0,\\
& -8 - 24 y_1 - 12 y_1^2 - 3 y_1^3 - 24 y_2 - 44 y_1 y_2 - 19 y_1^2 y_2 - 4 y_1^3 y_2 - 12 y_2^2 - 19 y_1 y_2^2 -  \\
&-4 y_1^2 y_2^2-3 y_2^3 - 4 y_1 y_2^3 =0.
\end{aligned} 
\]
Its solution is $(y_1^{5*},y_2^{5*})= \left(-\sqrt{(-5+\sqrt{33})/2}, \sqrt{(-5+\sqrt{33})/2} \right)$, note that $y_1^{5*}<y_2^{5*}$. 

The solution $(x_1^5(t), y_1^5(t))$ in $x\ge 0$ of the system \eqref{sistema5positivo} such that $(x_1^5(0),y_1^5(0)) = (0, y_1^{5*})$ is contained in the level curve  $H_{2c}(0,y_2^{5*})= (-5+\sqrt{33})^2/4 = H_{2c}(0,y_1^{5*})$, i.e. in the curve
\[ 
\begin{aligned}
H_{2c}(x,y) = \dfrac{(x^2 + y^2)^2}{1 + 4xy} = \dfrac{(-5+\sqrt{33})^2}{4}.
\end{aligned} 
\]
The solution $(x_2^5(t), y_2^5(t))$ in $x\le 0$ of the system \eqref{sistema5negativo} with $(x_2^5(0),y_2^5(0)) = (0,y_2^{5*})$ is contained in the level curve $H_{2_1c}(0,y_2^{5*})= (1-\sqrt{33})/2 = H_{2_1c}(0,y_1^{5*})$, i. e. in the curve
\[ 
\begin{aligned}
H_{2_1c}(x,y)= \dfrac{((1 + x)^2 + (-1 + x - y)^2)^2}{1 + 4 (1 + x) (-1 + x - y)}= \dfrac{(1-\sqrt{33})}{2}.
\end{aligned} 
\]

Drawing the orbits $(x_k^5(t), y_k^5 (t))$, $k=1,2$, we obtain the limit cycle of Figure \ref{cicloteo5}.
\end{proof}

The problem if the upper bound of Proposition \ref{teo5} for the maximum number of limit cycles of that class of discontinuous piecewise differential systems is reached or not remains open.

\section*{Acknowlegements}

This work has received funding from the Agencia Estatal de Investigaci\'on grant
PID2019-104658GB-I00, the H2020 European Research Council grant
MSCA-RISE-2017-777911, CNPq 304798/2019-3 grant, and São Paulo Paulo Research Foundation (FAPESP) grants 2021/14695-7, 2019/10269-3, 2018/05098-2, and 2016/00242-2.



%

\end{document}